\renewcommand\footnotemark{}
\newenvironment{rcases}
  {\left.\begin{aligned}}
  {\end{aligned}\right\rbrace}
\begin{document}

\title{Limits of Jensen polynomials for  partitions and other sequences}

\author{
  Cormac ~O'Sullivan\footnote{{\it Date:} Aug 15, 2021.
\newline \indent \ \ \
  {\it 2010 Mathematics Subject Classification:} 11P82, 33C45 
  \newline \indent \ \ \
Support for this project was provided by a PSC-CUNY Award, jointly funded by The Professional Staff Congress and The City
\newline \indent \ \ \
University of New York.}
  }

\date{}

\maketitle

%modular symbol
\def\s#1#2{\langle \,#1 , #2 \,\rangle}

%domains
\def\H{{\mathbf{H}}}
\def\F{{\frak F}}
\def\C{{\mathbb C}}
\def\R{{\mathbb R}}
\def\Z{{\mathbb Z}}
\def\Q{{\mathbb Q}}
\def\N{{\mathbb N}}
%symbols
\def\G{{\Gamma}}
\def\GH{{\G \backslash \H}}
\def\g{{\gamma}}
\def\L{{\Lambda}}
\def\ee{{\varepsilon}}
\def\K{{\mathcal K}}
\def\Re{\mathrm{Re}}
\def\Im{\mathrm{Im}}
\def\PSL{\mathrm{PSL}}
\def\SL{\mathrm{SL}}
\def\Vol{\operatorname{Vol}}
\def\lqs{\leqslant}
\def\gqs{\geqslant}
\def\sgn{\operatorname{sgn}}
\def\res{\operatornamewithlimits{Res}}
\def\li{\operatorname{Li_2}}
\def\lip{\operatorname{Li}'_2}
\def\pl{\operatorname{Li}}

\def\clp{\operatorname{Cl}'_2}
\def\clpp{\operatorname{Cl}''_2}
\def\farey{\mathscr F}

\def\dm{{\mathcal A}}
\def\ov{{\overline{p}}}
\def\ja{{K}}

\newcommand{\stira}[2]{{\genfrac{[}{]}{0pt}{}{#1}{#2}}}
\newcommand{\stirb}[2]{{\genfrac{\{}{\}}{0pt}{}{#1}{#2}}}
\newcommand{\norm}[1]{\left\lVert #1 \right\rVert}

\newcommand{\e}{\eqref}

%theorems

\newtheorem{theorem}{Theorem}[section]
\newtheorem{lemma}[theorem]{Lemma}
\newtheorem{prop}[theorem]{Proposition}
\newtheorem{conj}[theorem]{Conjecture}
\newtheorem{cor}[theorem]{Corollary}
\newtheorem{assume}[theorem]{Assumptions}
\newtheorem{adef}[theorem]{Definition}

%\newcounter{coundef}
%\newtheorem{adef}[coundef]{Definition}

\newcounter{counrem}
\newtheorem{remark}[counrem]{Remark}

\renewcommand{\labelenumi}{(\roman{enumi})}
\newcommand{\spr}[2]{\sideset{}{_{#2}^{-1}}{\textstyle \prod}({#1})}
\newcommand{\spn}[2]{\sideset{}{_{#2}}{\textstyle \prod}({#1})}

\numberwithin{equation}{section}

% this fixes spacing around \left, \right
\let\originalleft\left
\let\originalright\right
\renewcommand{\left}{\mathopen{}\mathclose\bgroup\originalleft}
\renewcommand{\right}{\aftergroup\egroup\originalright}

\bibliographystyle{alpha}

\begin{abstract}
It was discovered in \cite{GORZ} that the Jensen polynomials associated to many sequences have Hermite polynomial limits. We develop this theory  in detail, based on the log-polynomial property    which is a refinement of log-concavity and log-convexity. Applications to various partition sequences are given. An application to the sequence of factorials leads naturally  to evaluating limits of generalized Laguerre polynomials.
\end{abstract}

\section{Introduction}
A sequence of real numbers $\alpha(0),$ $\alpha(1),$ $\alpha(2), \dots$  is  log-concave if
\begin{equation}\label{logc}
\alpha(n+1)^2 \gqs \alpha(n) \cdot \alpha(n+2)
\end{equation}
 holds for $n \gqs 0$. If  the sequence is positive then being log-concave is equivalent to the sequence $\beta(n)=\log \alpha(n)$ being concave:
\begin{equation}\label{c}
2\beta(n+1) \gqs \beta(n) + \beta(n+2).
\end{equation}
Many combinatorial and number theoretic sequences are log-concave, such as the binomial coefficients $\binom{m}{n}$ as $n$ varies, for example.
Reversing the inequalities in \e{logc} and \e{c} defines log-convex and convex sequences, respectively.  Strictly log-concave means \e{logc}  holds with a strict inequality and similarly in the other cases. If the inequalities hold only for $n$ large enough then we say the sequence has the property asymptotically.

The following result mimics  the second derivative  test  and is an easy exercise.

\begin{prop} \label{first}
Let $\alpha(n)$ be a sequence of positive real numbers.  Suppose there exist $\delta(n)> 0$, $A(n)$  and $\kappa = \pm 1$  so that
\begin{equation} \label{ac2pp}
    \log\left(\frac{\alpha(n+j)}{\alpha(n)}\right) = A(n) j +\kappa \cdot \delta(n)^2 j^2 +  o\left(\delta(n)^2 \right)
  \end{equation}
as $n \to \infty$ for $j=1, 2$. Then the sequence $\alpha$ is asymptotically strictly log-concave if $\kappa=-1$ and asymptotically strictly log-convex if $\kappa=1$.
\end{prop}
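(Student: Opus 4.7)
The plan is to read the hypothesis as an asymptotic formula for the first and second differences of $\beta(n) := \log \alpha(n)$, and then simply form the second difference $\beta(n+2) - 2\beta(n+1) + \beta(n)$ to detect concavity versus convexity.

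Specifically, setting $j=1$ and $j=2$ in \e{ac2pp} gives
\begin{align*}
\beta(n+1) - \beta(n) &= A(n) + \kappa \delta(n)^2 + o(\delta(n)^2), \\
\beta(n+2) - \beta(n) &= 2A(n) + 4\kappa \delta(n)^2 + o(\delta(n)^2).
\end{align*}
Subtracting twice the first equation from the second, the terms involving $A(n)$ cancel and I obtain
\[
\beta(n+2) - 2\beta(n+1) + \beta(n) = 2\kappa \delta(n)^2 + o(\delta(n)^2).
\]
Since $\delta(n)^2 > 0$, the right-hand side has the same sign as $\kappa$ for all sufficiently large $n$. Thus for $\kappa = -1$ the second difference of $\beta$ is eventually strictly negative, which is exactly the strict version of \e{c}, and hence $\alpha$ is asymptotically strictly log-concave; for $\kappa = +1$ the second difference is eventually strictly positive, giving asymptotic strict log-convexity.

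There is no real obstacle here: the only subtle point is to ensure that the $o(\delta(n)^2)$ errors from the two instances of \e{ac2pp} combine to give an $o(\delta(n)^2)$ error in the second difference, which is immediate from linearity of the little-$o$ relation. The rest is algebra.
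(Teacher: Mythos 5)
Your argument is correct and is precisely the ``second derivative test'' computation the paper has in mind when it calls Proposition \ref{first} an easy exercise (the paper supplies no written proof). Forming the second difference of $\beta(n)=\log\alpha(n)$ from the $j=1,2$ cases, cancelling $A(n)$, and reading off the sign of $2\kappa\delta(n)^2+o(\delta(n)^2)$ using $\delta(n)>0$ is exactly the intended route, and the strict inequality in \e{c} for large $n$ follows as you say.
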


For another approach to log-concavity, note that \e{logc} is equivalent to the polynomial
$$
J_\alpha^{2,n}(X):=\binom{2}{0} \alpha(n) + \binom{2}{1} \alpha(n+1)X +\binom{2}{2} \alpha(n+2)X^2
$$
having  real roots. % If $J_\alpha^{2,n}(X)$ has distinct real roots then we have a strict inequality in \e{logc}.
Define the $d$th Jensen polynomial associated to the sequence  $\alpha$, and with shift $n$, as
\begin{equation} \label{jdef}
   J_\alpha^{d,n}(X):= \sum_{j=0}^d \binom{d}{j} \alpha(n+j)X^j.
\end{equation}
 For any $d \in \Z_{\gqs 2}$ then $J_\alpha^{d,n}(X)$ having all real zeros   implies \e{logc} by a result of Newton; see \cite[p. 53]{HLP}.
We are also interested in the reciprocal polynomials
\begin{equation} \label{kdef}
  \ja_\alpha^{d,n}(X):= X^d J_\alpha^{d,n}(1/X) = \sum_{j=0}^d \binom{d}{j} \alpha(n+j)X^{d-j}.
\end{equation}
In fact it was the  $K_\alpha^{d,0}(X)$ polynomials that  Jensen used in \cite{Je13}  to give criteria for an entire function to have only real zeros. Shifting by $n$ corresponds to working with the $n$th derivative of the function; see for example \cite[Sect. 3]{OS21}. %The next result is also a nice exercise.

\begin{prop} \label{basic}
Let $\alpha(n)$ be a sequence of positive real numbers and let $d$ be a positive integer.  Suppose there exists $A(n)$    so that
\begin{equation*} %\label{ac2pp}
    \log\left(\frac{\alpha(n+j)}{\alpha(n)}\right) = A(n) j +  o\left(1 \right)
  \end{equation*}
as $n \to \infty$ for $j=1, 2, \dots ,d$. Then, as $n\to \infty$,
\begin{align}
    \frac{1}{\alpha(n)} J^{d,n}_\alpha \left( \frac{X-1}{\exp(A(n))}\right) & \to X^d, \label{limj}\\
 \frac{\exp(A(n))^{-d}}{\alpha(n)} K^{d,n}_\alpha \left( \frac{X-1}{\exp(-A(n))}\right) & \to
 X^d. \label{limk}
\end{align}
\end{prop}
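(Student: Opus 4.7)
The plan is a direct substitution followed by the binomial theorem. First, rewrite the hypothesis in multiplicative form: for each fixed $j \in \{1,2,\dots,d\}$,
$$
\frac{\alpha(n+j)}{\alpha(n)} = \exp\bigl(A(n)j + o(1)\bigr) = \exp(A(n)j)\bigl(1+o(1)\bigr) \qquad (n \to \infty),
$$
using continuity of $\exp$ at $0$. (For $j=0$ the ratio is $1$, trivially of this shape.) This is the only analytic input; the rest is algebra.

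For \eqref{limj}, divide \e{jdef} by $\alpha(n)$ to get
$$
\frac{1}{\alpha(n)} J^{d,n}_\alpha(Y) \;=\; \sum_{j=0}^d \binom{d}{j} \frac{\alpha(n+j)}{\alpha(n)} Y^j \;=\; \sum_{j=0}^d \binom{d}{j}\bigl(1+o(1)\bigr)\exp(A(n)j)\, Y^j.
$$
Now set $Y = (X-1)/\exp(A(n))$, so that $\exp(A(n)j)\,Y^j = (X-1)^j$ for every $j$. The expression collapses to $\sum_{j=0}^d \binom{d}{j}(X-1)^j\bigl(1+o(1)\bigr)$, whose leading part is $((X-1)+1)^d = X^d$ by the binomial theorem, while each $o(1)$ coefficient contributes a vanishing error. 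This gives coefficient-wise (equivalently, locally uniform) convergence to $X^d$.

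For \eqref{limk}, apply the same trick to \e{kdef}: pulling out $\exp(A(n))^{-d}/\alpha(n)$ gives
$$
\frac{\exp(A(n))^{-d}}{\alpha(n)} K^{d,n}_\alpha(Y) = \sum_{j=0}^d \binom{d}{j}\bigl(1+o(1)\bigr)\exp\bigl(-A(n)(d-j)\bigr)\, Y^{d-j},
$$
and then substitute $Y = (X-1)/\exp(-A(n))$ so that $\exp(-A(n)(d-j))\,Y^{d-j} = (X-1)^{d-j}$. Reindexing by $k=d-j$ and using $\binom{d}{d-k}=\binom{d}{k}$ reduces the sum to $\sum_{k=0}^d \binom{d}{k}(X-1)^k\bigl(1+o(1)\bigr) \to X^d$, exactly as before. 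There is no real obstacle: the whole argument is a clean scaling plus the binomial theorem, and the hypothesis is used only to replace each ratio $\alpha(n+j)/\alpha(n)$ by $\exp(A(n)j)$ up to a $1+o(1)$ factor.
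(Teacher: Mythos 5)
Your proof is correct and is exactly the argument the paper intends: its own proof is the single remark that the result ``is straightforward using the definitions \e{jdef}, \e{kdef} and that $\exp(o(1))=1+o(1)$,'' and your substitution-plus-binomial-theorem computation is precisely that, with the details filled in. The only point worth keeping in mind is the one you already handled implicitly: the $o(1)$ is for each fixed $j$ and there are only finitely many $j\lqs d$, so the errors combine into $o(1)$ coefficients of the fixed polynomials $(X-1)^j$, giving the coefficient-wise convergence the paper requires.
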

\begin{proof}
This is straightforward using the definitions \e{jdef}, \e{kdef} and that $\exp(o(1))=1+o(1)$.
\end{proof}

Throughout this paper, limits such as \e{limj}, \e{limk}, and  \e{onno}, \e{hjak} below,  mean that the  coefficients of the polynomial on the left tend to the corresponding coefficients  on the right.
The following definition, based on \cite[Eq. (15)]{GORZ},  describes a situation when we have more information about the sequence $\alpha$ than Propositions \ref{first}, \ref{basic} and   $\log \alpha(n+j)$ can be well approximated by a polynomial in $j$ for large $n$.

\begin{adef} \label{dbf}
{\rm A sequence of positive real numbers $\alpha(n)$ is {\em log-polynomial of degree $m \gqs 2$, with data $\{A(n),\kappa,\delta(n)\}$} for $\kappa=\pm 1$, if it satisfies the following conditions.
 There exist sequences $A(n)$, $\delta(n)$ and $g_k(n)$ for $k=3,4, \dots, m$  so that
\begin{equation} \label{hjk}
    \log\left(\frac{\alpha(n+j)}{\alpha(n)}\right) = A(n) j +\kappa \cdot \delta(n)^2 j^2 + \sum_{k=3}^{m} g_k(n) j^k + o\left(\delta(n)^{m+1}\right)
  \end{equation}
as $n \to \infty$, for $j=1,2, \dots, m+1$. We also require  $\delta(n)>0$, $\delta(n) \to 0$ and $g_k(n)=o(\delta(n)^k)$ as $n \to \infty$.
}
\end{adef}

Some basic properties of log-polynomial sequences are contained in the next result.

\begin{prop} \label{logp}
Suppose  $\alpha(n)$ is a log-polynomial sequence of degree $m$ with data $\{A(n),\kappa,\delta(n)\}$.
\begin{enumerate}
  \item  Then $\alpha(n)$ is  log-polynomial  for all degrees $2,3, \dots, m$.
  \item The reciprocal sequence $1/\alpha(n)$ is  log-polynomial of degree $m$ with data $\{-A(n),-\kappa,\delta(n)\}$.
  \item If $\alpha(n)$ is also log-polynomial of degree $m$ with different data $\{A^*(n),\kappa^*,\delta^*(n)\}$ then $\kappa^*=\kappa$ and
\begin{equation} \label{ophf}
   A^*(n)  = A(n)+o\left(\delta(n)^{m+1}\right),  \qquad
\delta^*(n)^2  = \delta(n)^2 + o\left(\delta(n)^{m+1}\right)
\qquad \text{as \quad $n\to \infty$.}
\end{equation}
\end{enumerate}
\end{prop}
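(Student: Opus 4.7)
Parts (i) and (ii) follow directly from Definition \ref{dbf}. For (i), fix $m' < m$ and truncate the expansion \eqref{hjk} at the $j^{m'}$ term. Every dropped term $g_k(n)\,j^k$ with $m' < k \lqs m$ satisfies, for fixed $j$, the estimate $g_k(n)\,j^k = o(\delta(n)^k) = o(\delta(n)^{m'+1})$, since $\delta(n) \to 0$ forces $\delta(n)^k = o(\delta(n)^{m'+1})$ for $k \gqs m'+1$. The original error $o(\delta(n)^{m+1})$ is likewise $o(\delta(n)^{m'+1})$, and the required range $\{1,\dots,m'+1\}$ is contained in $\{1,\dots,m+1\}$. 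For (ii), apply $-\log$ to both sides of \eqref{hjk}: the coefficients negate, and $-g_k(n)$ remains $o(\delta(n)^k)$, so $1/\alpha(n)$ has the stated data.

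The main content is (iii). The plan is to subtract the two expansions for $\log(\alpha(n+j)/\alpha(n))$, converting the problem into a linear system in the differences of coefficients. Set $e_1(n) = A(n) - A^*(n)$, $e_2(n) = \kappa\delta(n)^2 - \kappa^*\delta^*(n)^2$, and $e_k(n) = g_k(n) - g_k^*(n)$ for $3 \lqs k \lqs m$. Subtracting the two expansions gives
\[
\sum_{k=1}^{m} e_k(n)\, j^k = \eta_j(n), \qquad j = 1, 2, \dots, m+1,
\]
with $\eta_j(n) = o(\delta(n)^{m+1}) + o(\delta^*(n)^{m+1})$. The $m \times m$ coefficient matrix $(j^k)_{j,k=1}^m$ factors as $\operatorname{diag}(1,2,\dots,m)$ times the Vandermonde matrix in the distinct points $1,\dots,m$, and is therefore invertible. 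Using the first $m$ equations, each $e_k(n)$ is a fixed linear combination of $\eta_1(n), \dots, \eta_m(n)$, so
\[
e_k(n) = o(\delta(n)^{m+1}) + o(\delta^*(n)^{m+1}) \qquad (1 \lqs k \lqs m).
\]

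The main obstacle is extracting $\kappa^* = \kappa$ from the bound on $e_2(n)$. Using $\delta(n), \delta^*(n) \to 0$ and $m \gqs 2$, writing any $f \in o(\delta(n)^{m+1})$ as $\epsilon_n \delta(n)^{m+1}$ with $\epsilon_n \to 0$ and bounding $|f| \lqs \epsilon_n \delta(n)\,(\delta(n)^2 + \delta^*(n)^2)$ yields the inclusion
\[
o(\delta(n)^{m+1}) + o(\delta^*(n)^{m+1}) \subseteq o\bigl(\delta(n)^2 + \delta^*(n)^2\bigr).
\]
If $\kappa^* = -\kappa$ then $|e_2(n)| = \delta(n)^2 + \delta^*(n)^2$, forcing $\delta(n)^2 + \delta^*(n)^2 = o(\delta(n)^2 + \delta^*(n)^2)$, which is absurd since both quantities are positive. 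Hence $\kappa^* = \kappa$, and the bound becomes $|\delta(n)^2 - \delta^*(n)^2| = o(\delta(n)^2 + \delta^*(n)^2)$, forcing $\delta^*(n)/\delta(n) \to 1$. This comparability lets us replace $o(\delta^*(n)^{m+1})$ by $o(\delta(n)^{m+1})$ in the bounds for $e_1(n)$ and $e_2(n)$, yielding \eqref{ophf}.
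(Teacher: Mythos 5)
Your proof is correct, and while the overall skeleton matches the paper's (subtract the two expansions, show each coefficient difference is $o(\delta(n)^{m+1}+\delta^*(n)^{m+1})$, then bootstrap to $\delta^*(n)/\delta(n)\to 1$ and clean up the error terms), you differ at two genuine decision points. First, to extract the coefficient bounds from the values at $j=1,\dots,m$ you invert the matrix $(j^k)_{j,k=1}^m$ via its factorization into a diagonal matrix times a Vandermonde matrix; the paper instead proves Lemma \ref{delt} by iterated forward differences and Stirling subset numbers. Both are valid routes to the same linear-algebra fact; the paper's finite-difference lemma is reused elsewhere (it is the same mechanism as Lemma \ref{finn}), whereas your Vandermonde argument is shorter and self-contained here. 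Second, the paper settles $\kappa^*=\kappa$ \emph{before} subtracting, by appeal to Proposition \ref{first} (a sequence cannot be both asymptotically strictly log-concave and strictly log-convex); you keep $\kappa$ and $\kappa^*$ general, bound $e_2(n)=\kappa\delta(n)^2-\kappa^*\delta^*(n)^2$ by $o(\delta(n)^2+\delta^*(n)^2)$, and rule out $\kappa^*=-\kappa$ by the sign contradiction. This is arguably cleaner since it needs no external input. Your final step, deducing $\delta^*(n)^2/\delta(n)^2\to 1$ directly from $|1-t_n|/(1+t_n)\to 0$, is also a more direct version of the paper's chain of equivalences involving $h(n)=\delta^*(n)/\delta(n)-1$. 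The only thing the paper records that you do not is the byproduct \eqref{not} for the $g_k$'s, which is not part of the statement but is used later in section \ref{kregpa}.
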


%The first two are clear and the third is proved in ...

Let $H_d(X)$ be the $d$th Hermite polynomial. As reviewed in section \ref{polys}, we have
\begin{align} \label{mv1}
   H_d(X/2) & = \sum_{k=0}^d \frac{d!}{(d-k)!} \binom{d-k}{k} (-1)^{k} X^{d-2k}, \\
i^{-d} H_d(i X/2) &= \sum_{k=0}^d \frac{d!}{(d-k)!} \binom{d-k}{k} X^{d-2k}. \label{mv2}
\end{align}
Griffin, Ono, Rolen and Zagier in \cite{GORZ} discovered a refinement of Proposition \ref{basic}, showing   that in some cases the $d$th Jensen polynomial of a sequence has limit  $H_d(X)$ as the shift $n$ increases. Precisely, Theorem 3 of \cite{GORZ}   says that if $\alpha(n)$ is a log-polynomial sequence of degree $d$ with data $\{A(n),-1,\delta(n)\}$ then
\begin{equation} \label{onno}
   \frac{\delta(n)^{-d}}{\alpha(n)} J^{d,n}_\alpha \left( \frac{\delta(n) X -1}{\exp( A(n))}\right)  \to H_d(X/2) \qquad \text{as} \quad n\to \infty.
\end{equation}
(Unfortunately \cite[Thm. 3]{GORZ} is slightly misstated, omitting the sum over $k$ in \e{hjk}; it is given correctly in \cite[Eq. (15)]{GORZ}.) Replacing $X$ by $X/\delta(n)$ in \e{onno} implies that 
\begin{equation} \label{onno2}
  \frac{1}{\alpha(n)} J^{d,n}_\alpha \left( \frac{X-1}{\exp(A(n))}\right)
 \approx X^d -2 \binom{d}{2} \delta(n)^2 X^{d-2}+ 12 \binom{d}{4} \delta(n)^4 X^{d-4}+ \cdots,
\end{equation}
where \e{onno2} means that given any $\varepsilon>0$ the  coefficients of  $X^{d-j}$ on both sides agree to within $\varepsilon \cdot \delta(n)^j$ for $n$ sufficiently large.
%Hence \e{onno} is giving more information than \e{limj}.

 The next definition allows us to include the natural extensions of \e{onno} to  the reciprocal Jensen polynomials $\ja^{d,n}_\alpha$ and also the $\kappa=1$ case.

\begin{adef} \label{hjdef}
{\rm A sequence of positive real numbers $\alpha(n)$ is {\em Hermite-Jensen of degree $d$ and data $\{A(n)$, $\kappa$, $\delta(n)\}$} if it satisfies the following conditions. As $n\to \infty$,
\begin{equation} \label{hjak}
\begin{rcases}
   \frac{\delta(n)^{-d}}{\alpha(n)} J^{d,n}_\alpha \left( \frac{\delta(n) X -1}{\exp( A(n))}\right) & \\
   \frac{(\exp(A(n)) \delta(n))^{-d}}{\alpha(n)}
\ja^{d,n}_\alpha \left( \frac{\delta(n) X -1}{\exp(-A(n))}\right)
 &
\end{rcases}
 \to \begin{cases}
H_d(X/2) & \text{ if $\kappa=-1$,} \\
i^{-d} H_d(i X/2) & \text{ if $\kappa=1$.}
\end{cases}
\end{equation}
}
\end{adef}

As \e{hjak} is always true for $d=0$ we restrict our attention to degrees $d \gqs 1$. Wagner in \cite[Def. 1.1]{Wa} uses the term `Hermite-Jensen' to  refer to what we are calling log-polynomial sequences, though with the sum over $k$ in \e{hjk} missing.
%See \e{mv1} and \e{mv2} for explicit descriptions of the polynomials on the right of \e{hjak}.

\begin{theorem} \label{vog}
Suppose $\alpha(n)$ is a log-polynomial  sequence of degree $m$ with data $\{A(n)$, $\kappa$, $\delta(n)\}$. Then this sequence is Hermite-Jensen of degree $d$ for $1\lqs d \lqs m+1$ and the same data.
\end{theorem}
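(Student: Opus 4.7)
The strategy is to reduce both convergence statements in \e{hjak} to a single asymptotic estimate for finite differences of the normalized ratios $\phi_j(n) := \exp(E_j(n))$, where $E_j(n) := \log(\alpha(n+j)/\alpha(n)) - A(n)j$. By \e{hjk}, we have $E_j(n) = \kappa\delta(n)^2 j^2 + \sum_{k=3}^m g_k(n) j^k + o(\delta(n)^{m+1})$, uniformly for $j \in \{0,1,\ldots,m+1\}$. Substituting $\alpha(n+j) = \alpha(n)e^{A(n)j}\phi_j(n)$ into \e{jdef} and \e{kdef} and simplifying, the left-hand sides of \e{hjak} become
\[
\frac{1}{\delta^d}\sum_{j=0}^d \binom{d}{j}\phi_j(n)(\delta X - 1)^j \qquad \text{and} \qquad \frac{1}{\delta^d}\sum_{j=0}^d \binom{d}{j}\phi_{d-j}(n)(\delta X - 1)^j
\]
respectively, with the $A(n)$-dependence cancelling out.

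Next I expand $(\delta X - 1)^j$ binomially and interchange summation order, using $\binom{d}{j}\binom{j}{i} = \binom{d}{i}\binom{d-i}{j-i}$, to obtain the identity
\[
\sum_{j=0}^d \binom{d}{j}(\delta X - 1)^j P(j) = \sum_{i=0}^d \binom{d}{i}(\delta X)^i (-1)^{d-i}\Delta^{d-i} P(i),
\]
valid for any function $P$, where $\Delta^r P(i) := \sum_{\ell=0}^r \binom{r}{\ell}(-1)^{r-\ell}P(i+\ell)$. Applying this with $P = \phi$ for the $J$-sum, and with $P(j) = \phi_{d-j}$ for the $K$-sum, where a reindexing $\ell \mapsto r-\ell$ shows $\Delta^{d-i}P(i) = (-1)^{d-i}\Delta^{d-i}\phi_0$, the coefficient of $X^i$ in the $J$-limit becomes $\binom{d}{i}(-1)^{d-i}\Delta^{d-i}\phi_i(n)/\delta(n)^{d-i}$, and in the $K$-limit $\binom{d}{i}\Delta^{d-i}\phi_0(n)/\delta(n)^{d-i}$. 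The two differ only by a sign $(-1)^{d-i}$ that will turn out to be harmless.

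The heart of the proof is the claim that for any fixed $r \le d \le m+1$ and any fixed nonnegative integer $i_0$,
\[
\frac{\Delta^r \phi_{i_0}(n)}{\delta(n)^r} \longrightarrow \begin{cases} \kappa^{r/2}\, r!/(r/2)! & \text{if $r$ is even,} \\ 0 & \text{if $r$ is odd.} \end{cases}
\]
To prove this I write $E_j = P_j + \tau_j$ where $P_j := \kappa\delta^2 j^2 + \sum_{k=3}^m g_k j^k$ is polynomial in $j$ and $\tau_j = o(\delta^{m+1})$, so $\phi_j = e^{P_j}(1+o(\delta^{m+1}))$. Expanding $e^{P_j} = \sum_{\ell \ge 0}P_j^\ell/\ell!$ by the multinomial theorem and collecting powers of $j$, the coefficient of $j^s$ is a sum of monomials $\delta^{2\ell_2}\prod_{k\ge 3}g_k^{\ell_k}$ (with $2\ell_2 + \sum_{k\ge 3} k\ell_k = s$) weighted by combinatorial factors. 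Because $g_k = o(\delta^k)$, every monomial with some $\ell_k > 0$ for $k \ge 3$ is $o(\delta^s)$, so the unique term of order exactly $\delta^s$ is $\kappa^{s/2}\delta^s/(s/2)!$, appearing only for $s$ even. Applying $\Delta^r$ at $j=i_0$ annihilates $j^s$ for $s<r$, the $s=r$ term contributes $r!$ times this leading coefficient, and $s>r$ terms contribute $O(\delta^{r+1})$; the multiplicative error from $\tau_j$, after division by $\delta^r$, becomes $o(\delta^{m+1-r}) \to 0$ since $r\le m+1$.

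Assembling everything, the coefficient of $X^{d-2k}$ in either limit is $\binom{d}{2k}\kappa^k(2k)!/k! = d!\kappa^k/(k!(d-2k)!)$, while the odd-index coefficients vanish. Comparing with \e{mv1} and \e{mv2} identifies the limits as $H_d(X/2)$ for $\kappa=-1$ and $i^{-d}H_d(iX/2)$ for $\kappa=1$, completing both lines of \e{hjak}. The main obstacle lies in the multinomial bookkeeping of the third paragraph: one must verify that no cross-term in the expansion of $P_j^\ell$ mixing $\delta^2$ and $g_k$ factors can inadvertently attain order exactly $\delta^s$, and that the error $o(\delta^{m+1})$ really is absorbed by dividing by $\delta^{d-i}$. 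Both rely squarely on the hypotheses $g_k = o(\delta^k)$ and $d \le m+1$ from Definition \ref{dbf}.
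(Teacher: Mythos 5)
Your proposal is correct and follows essentially the same route as the paper: your forward-difference identity for the coefficients of $X^i$ is exactly the content of Lemma \ref{finn} and the proof of Theorem \ref{pj} (compare \e{pop}), and your multinomial expansion of $e^{P_j}$ isolating the pure $(\kappa\delta^2 j^2)^{s/2}$ term is the paper's De Moivre polynomial computation of $C_r(n)$ and its limit \e{crnt}. The only point to tidy is that the infinite sum $\sum_{\ell\gqs 0}P_j^\ell/\ell!$ should be truncated at some $\ell=t$ with $2t+2>d$ (as the paper does) before collecting powers of $j$, so that the bound on the $s>r$ terms involves only finitely many monomials.
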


It is perhaps surprising that in \e{hjak} we also find Hermite polynomials as the limit of the reciprocal polynomials $\ja^{d,n}_\alpha$. The Hermite polynomials themselves do not seem to satisfy any reciprocal relations.

As the orthogonal polynomials $H_d(x)$ have distinct real roots, we will see that it follows from  \e{hjak}   that $J^{d,n}_\alpha(X)$   must also have distinct real roots for $n$ large enough when $\kappa=-1$.
In this way, Theorem \ref{vog} may be used to show that the Jensen polynomials associated to many sequences must have distinct real roots for $n$ large.   This idea was introduced in \cite{GORZ}. An application to the partitions sequence  $p(n)$ in Theorem 5 of \cite{GORZ}   proved the real roots  Conjecture 1.5 of \cite{CJW}. \cite[Thm. 7]{GORZ} generalized this to sequences of coefficients of weakly holomorphic modular forms and a further application to the Taylor coefficients of the Riemann zeta $\zeta(s)$ at $s=1/2$ was given  in \cite[Thm. 2]{GORZ}.

The precise properties of Jensen polynomials associated to log-polynomial sequences may be stated as follows:

\begin{theorem} \label{redct}
Assume the sequence $\alpha(n)$ is log-polynomial for some degree $m\gqs 2$.
\begin{enumerate}
  \item If $\kappa=-1$ then the sequence is asymptotically strictly log-concave. If $\kappa=1$ then the sequence is asymptotically strictly log-convex.
  \item Fix $d$ with $1\lqs d \lqs m+1$.  For $n$ sufficiently large, the zeros of $J_\alpha^{d,n}(X)$ and $K_\alpha^{d,n}(X)$ can be described as follows. Firstly, they are all distinct. If $\kappa=-1$ they are all real. If $\kappa=1$ they are all not real, except for a single real zero when $d$ is odd.
\end{enumerate}
\end{theorem}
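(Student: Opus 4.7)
For part (i), the plan is to feed the log-polynomial assumption directly into Proposition \ref{first}. Specializing \eqref{hjk} to $j=1,2$, each term $g_k(n) j^k$ is $o(\delta(n)^k)=o(\delta(n)^2)$ for $k\gqs 3$ since $\delta(n)\to 0$, and the remainder $o(\delta(n)^{m+1})$ is similarly absorbed because $m\gqs 2$. Thus \eqref{ac2pp} holds with the same $A(n)$, $\delta(n)$ and $\kappa$, so Proposition \ref{first} immediately yields asymptotic strict log-concavity when $\kappa=-1$ and asymptotic strict log-convexity when $\kappa=1$.

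For part (ii), the plan is to invoke Theorem \ref{vog} and reduce the root structure of $J_\alpha^{d,n}$ and $K_\alpha^{d,n}$ to that of the Hermite-type limit polynomials. By Theorem \ref{vog}, the sequence is Hermite-Jensen of degree $d$ for every $1\lqs d\lqs m+1$, so the polynomials on the two lines of the left-hand side of \eqref{hjak} converge coefficient-wise to $H_d(X/2)$ (case $\kappa=-1$) or $i^{-d}H_d(iX/2)$ (case $\kappa=1$). From \eqref{mv1}--\eqref{mv2} each limit is monic of degree $d$, and a quick computation using \eqref{hjk} at $j=d$ confirms that the leading coefficient on the prelimit side also tends to $1$, so degree is preserved. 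Since $H_d$ has $d$ distinct real zeros and is an odd function exactly when $d$ is odd, $H_d(X/2)$ has $d$ distinct real zeros, while $i^{-d}H_d(iX/2)$ has $d$ distinct zeros that are purely imaginary and nonzero when $d$ is even and, when $d$ is odd, purely imaginary together with a single simple zero at $X=0$. The substitutions $Y=(\delta(n)X-1)/\exp(\pm A(n))$ appearing inside \eqref{hjak} are real affine bijections in $X$, so the zero patterns of $J_\alpha^{d,n}$ and $K_\alpha^{d,n}$ correspond exactly to those of the rescaled polynomials on the left of \eqref{hjak}, and it suffices to analyze the latter.

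To transfer the limit zero pattern to the prelimit polynomial, I use continuity of roots together with a sign-change argument. Because each limit is monic of degree $d$ with distinct zeros, a standard continuity result (for instance via Rouch\'e on small disks around each limit zero) shows that, for $n$ large, the zeros of the prelimit polynomial cluster one-to-one around the zeros of the limit. When $\kappa=-1$ each limit zero is simple and real, so the limit polynomial changes sign in a small interval around it, forcing the real prelimit polynomial to change sign on that interval for $n$ large and hence to have a real zero there by the intermediate value theorem; the $d$ distinct real zeros produced this way account for every zero. When $\kappa=1$ the prelimit polynomial has real coefficients and its non-real zeros come in conjugate pairs; any zero close to a simple purely-imaginary limit zero must itself have nonzero imaginary part for $n$ large, hence be non-real. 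If $d$ is odd, the same intermediate value argument applied near $X=0$ produces the single real zero. The main subtlety is this last step: coefficient-wise convergence alone does not distinguish real from non-real perturbations, so one must combine the real-coefficient conjugacy constraint with the intermediate value theorem to pin down the nature of each individual root.
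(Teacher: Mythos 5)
Your proposal is correct and follows essentially the same route as the paper: part (i) via Proposition \ref{first}, and part (ii) by normalizing the Hermite--Jensen limits from Theorem \ref{vog} to monic real polynomials and applying continuity of roots (the paper's Theorem \ref{dct2}, your Rouch\'e argument on small disks). The only local difference is how the reality of each individual root is pinned down: you use a sign change and the intermediate value theorem near each simple real limit zero, whereas the paper observes that a non-real root of a real polynomial would bring its conjugate into the same small ball and violate the one-root count; both arguments are valid.
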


In just a few pages,  the authors of \cite{GORZ} introduce many striking results and techniques. Our aim in this article is to expand and clarify these ideas, supply some omitted details, and to give further applications.

\section{Applications to partitions}
In sections \ref{lim}, \ref{main} we provide  detailed proofs  of Theorems \ref{vog} and \ref{redct}.
It is mentioned in \cite[Sect. 6]{GORZ} that  \e{onno} is still valid if $A(n)$ and $\delta(n)$ are replaced by approximations. The precise conditions needed are given next.

\begin{theorem} \label{vog2}
Let $\alpha(n)$ be a log-polynomial  sequence of degree $m$ with data $\{A(n)$, $\kappa$, $\delta(n)\}$.
Suppose $A^*(n)$ and $\delta^*(n)$ satisfy
\begin{equation} \label{tre}
   A^*(n) = A(n)+o(\delta(n)), \qquad \delta^*(n) = \delta(n)(1+o(1)) \qquad \text{as \quad $n\to \infty$.}
\end{equation}
Then $\alpha(n)$ is Hermite-Jensen of degree $d$ for $1\lqs d \lqs m+1$ with  data $\{A^*(n)$, $\kappa$, $\delta^*(n)\}$.
\end{theorem}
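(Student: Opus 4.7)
The strategy is to derive Theorem \ref{vog2} from Theorem \ref{vog}, which already yields the Hermite--Jensen conclusion for the original data $\{A(n),\kappa,\delta(n)\}$. The task is then to transfer the coefficient-wise limits \e{hjak} to the starred data, and I would do this by a linear change of variable in the argument of $J^{d,n}_\alpha$ and $\ja^{d,n}_\alpha$.

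Focus first on the $J^{d,n}_\alpha$ row of \e{hjak}. Introduce $Y=Y_n(X)$ chosen so that the arguments of $J^{d,n}_\alpha$ arising from the two sets of data coincide:
\[
\frac{\delta(n)\,Y-1}{\exp(A(n))} \;=\; \frac{\delta^*(n)\,X-1}{\exp(A^*(n))}.
\]
Writing $u_n:=\exp(A(n)-A^*(n))$, this solves as $Y=a_n X+b_n$ with
\[
a_n=\frac{u_n\,\delta^*(n)}{\delta(n)},\qquad b_n=\frac{1-u_n}{\delta(n)}.
\]
The first estimate in \e{tre} combined with $\exp(t)=1+t+O(t^2)$ gives $u_n=1+o(\delta(n))$, while the second gives $\delta^*(n)/\delta(n)\to 1$. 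Hence $a_n\to 1$ and, crucially, $b_n=o(\delta(n))/\delta(n)\to 0$. Let $\phi_n(X)$ denote the normalized $J$-expression for the original data appearing on the left of \e{hjak}. Then
\[
\frac{\delta^*(n)^{-d}}{\alpha(n)}\,J^{d,n}_\alpha\!\left(\frac{\delta^*(n)X-1}{\exp(A^*(n))}\right)
=\left(\frac{\delta(n)}{\delta^*(n)}\right)^{\!d}\phi_n\bigl(a_n X+b_n\bigr).
\]
Coefficient-wise convergence of a sequence of polynomials of fixed degree is preserved under composition with a linear change of variable whose coefficients converge, as one sees by a direct binomial expansion. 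Combined with Theorem \ref{vog} and $\delta(n)/\delta^*(n)\to 1$, this establishes the $J$-row of \e{hjak} for the starred data.

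The $\ja^{d,n}_\alpha$ row is handled identically. The analogous substitution produces $\delta(n)Y-1=u_n^{-1}(\delta^*(n)X-1)$, again a linear change of variable whose linear coefficient tends to $1$ and constant term to $0$, while the prefactor ratio $(\exp(A(n))\delta(n)/\exp(A^*(n))\delta^*(n))^d=(u_n\delta(n)/\delta^*(n))^d$ tends to $1$ by the same estimates. The argument is essentially algebraic and presents no serious obstacle; the one substantive point is that the hypothesis $A^*(n)-A(n)=o(\delta(n))$ is precisely what is needed to make $(1-u_n)/\delta(n)\to 0$, so weakening this to an $o(1)$ assumption would not suffice.
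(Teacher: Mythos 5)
Your proof is correct, but it takes a genuinely different route from the paper. The paper does not work with the polynomial limits at all: it returns to the underlying expansion $\alpha(n+j)/(\alpha(n)R(n)^j)=\sum_r C_r(n)\delta(n)^r j^r+o(\delta(n)^d)$ obtained in the proof of Theorem \ref{vog}, invokes Proposition \ref{pjxx} to show this expansion survives the replacement $R(n)\mapsto R^*(n)=R(n)(1+o(\delta(n)))$, $\delta(n)\mapsto\delta^*(n)$ with new coefficients $C_r^*(n)$ having the same limits \e{crnt}, and then reapplies Theorem \ref{pj} to the starred data. You instead take the conclusion of Theorem \ref{vog} as a black box and observe that the starred normalized polynomial is exactly $(\delta(n)/\delta^*(n))^d\phi_n(a_nX+b_n)$ for an affine substitution with $a_n\to1$, $b_n\to0$, under which coefficient-wise limits of degree-$d$ polynomials are preserved; your identification $b_n=(1-u_n)/\delta(n)$ also isolates cleanly why the hypothesis $A^*(n)-A(n)=o(\delta(n))$ is exactly what is needed, which matches the paper's remark comparing \e{ophf} and \e{tre}. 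Your argument is shorter and more elementary, avoiding Proposition \ref{pjxx} and Lemma \ref{finnc} entirely; the paper's route costs more but produces Proposition \ref{pjxx} as a reusable statement about the stability of the expansion \e{sinnxx} itself, which is of independent interest beyond this theorem. Both arguments are sound, and your handling of the $K^{d,n}_\alpha$ row (substitution with $u_n^{-1}$ and the prefactor ratio tending to $1$) is equally valid.
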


Comparing \e{ophf} and \e{tre} shows that the data required for a sequence to be Hermite-Jensen can be much less precise than the data required for it to be log-polynomial.

Theorems \ref{vog} and \ref{vog2} are illustrated next with applications to various partition functions.
  A partition of $n$ is a  non increasing sequence of positive integers that  sum to $n$. The partition function $p(n)$ counts all the partitions of $n$. The number of overpartitions of $n$ is $\overline p(n)$ and these are partitions where the first appearance of a part of each size may or may not be overlined. For $k\in \Z_{\gqs 2}$, let $b_k(n)$ denote the number of partitions of $n$ where no part sizes are multiples of $k$. These are called $k$-regular partitions.  Glaisher's theorem of 1883 implies that $b_k(n)$ also counts the number of partitions of $n$ where parts appear at most $k-1$ times. Hence $b_2(n)$ gives the number of partitions of $n$ into distinct parts.

The well-known generating functions for $p(n)$, $\overline p(n)$ and $b_k(n)$ are
\begin{equation}\label{pgf}
  \sum_{n=0}^\infty p(n) q^n =  \prod_{j=1}^\infty \frac 1{1-q^j}, \qquad
\sum_{n=0}^\infty \overline p(n) q^n =  \prod_{j=1}^\infty \frac {1+q^j}{1-q^j}, \qquad \sum_{n=0}^\infty b_k(n) q^n =  \prod_{j=1}^\infty \frac {1-q^{k j}}{1-q^j}.
\end{equation}
Multiplying the first product in \e{pgf} by the third when $k=2$ gives the second. This implies the relation
\begin{equation*}
  \overline p(n) =\sum_{r=0}^n p(r)\cdot b_2(n-r).
\end{equation*}

Precise asymptotics are available for these partition functions. As $n \to \infty$,
\begin{align}\label{hard2}
 p(n) & =\frac{1}{4\sqrt{3}n'} \left(1-\frac{1}{ \sqrt{\g n'}}\right) e^{ \sqrt{\g n'}} \left( 1+O(e^{- \sqrt{\g n}/2}) \right), \\
  \overline p(n) & =\frac{1}{8n} \left(1-\frac{1}{\pi \sqrt{n}}\right) e^{\pi \sqrt{n}} \left( 1+O(e^{-2\pi \sqrt{n}/3}) \right), \label{ohard2}\\
 b_k(n) & = \frac{\sqrt{k'}}{2 \sqrt{k \cdot n''}} I_1\left( \sqrt{k' \cdot n''}\right)
\left( 1+O\left( e^{-\varepsilon_k \sqrt{n}}\right)\right), \label{bkas}
\end{align}
for certain $\varepsilon_k>0$. We are using the notation
\begin{equation} \label{ntn}
   \g:=\frac{2\pi^2}{3}, \qquad  k':=\frac{2\pi^2}{3}\left(1-\frac 1k \right), \qquad n':=n-\frac{1}{24}, \qquad n'':=n+\frac{k-1}{24}.
\end{equation}
Also $I_1$ is a  modified Bessel function of the first kind \cite[Eq. (4.12.2)]{AAR}:
\begin{equation}\label{ibes}
  I_\alpha(2z):= z^\alpha \sum_{j=0}^\infty \frac{z^{2j}}{ \G(\alpha+j+1) j!}.
\end{equation}
The estimate \e{hard2} follows from the work of Hardy and Ramanujan in \cite{HR} and is given in this form by Rademacher in  \cite[p. 278]{Ra}. Then \e{ohard2} follows similarly, is mentioned in \cite{HR}, and generalized in \cite{Sil}. The estimate \e{bkas} is Corollary 4.1 of \cite{Hag}.
%Zuckerman 1939 \cite{Zuc}

%\cite{BO} in general...

\begin{theorem} \label{partit}
With $S:= \pi (2/3)^{1/2}(n-1/24)^{1/2}$, set %\sqrt{2/3} \sqrt{n}
\begin{equation} \label{part-ad}
  A(n):=\frac{\pi^2}{3} \left(\frac{1}{S-1} - \frac{3}{S^2} \right), \qquad \delta(n):= \frac{\pi^2}{3} \left(\frac{1}{2S(S-1)^2} - \frac{3}{S^4} \right)^{1/2}.
\end{equation}
Then for every  degree $m \gqs 2$ the partition sequence $p(n)$ is log-polynomial with data $\{A(n)$, $-1$, $\delta(n)\}$. Hence $p(n)$ is also Hermite-Jensen for all positive integer degrees and the same data or, more simply, $\{A^*(n)$, $-1$, $\delta^*(n)\}$ for
\begin{equation} \label{part-ads}
   A^*(n):=\frac{\pi}{6^{1/2}n^{1/2}}, \qquad  \delta^*(n) := \frac{\sqrt{\pi}}{2 \cdot 6^{1/4} n^{3/4}}.
\end{equation}
\end{theorem}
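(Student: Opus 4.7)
The plan is to reduce the log-polynomial condition to a Taylor expansion of a single smooth function of $n$ read off from the asymptotic \e{hard2}. Set
\[
L(n) := S_n - \log n' + \log(1 - 1/S_n),
\]
so that $\log p(n) = L(n) - \log(4\sqrt{3}) + O\bigl(e^{-\sqrt{\g n}/2}\bigr)$ by \e{hard2}. The exponentially small error is absorbed into the $o(\delta(n)^{m+1})$ tolerance of Definition \ref{dbf}, so it suffices to analyze $L(n+j) - L(n)$ for $j = 1, \dots, m+1$. First I would apply Taylor's theorem to $L$, setting $A(n) := L'(n)$, $\delta(n)^2 := -L''(n)/2$, and $g_k(n) := L^{(k)}(n)/k!$ for $3 \lqs k \lqs m$; the Lagrange remainder will play the role of the $o(\delta(n)^{m+1})$ error in \e{hjk}.

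The next step is to verify that these Taylor coefficients reproduce the closed forms in \e{part-ad}. Differentiating using $S_n'(n) = \g/(2 S_n)$ and $\g = 2\pi^2/3$ gives
\[
L'(n) = \frac{\g}{2S_n} - \frac{\g}{S_n^2} + \frac{\g}{2 S_n^2(S_n - 1)} = \frac{\pi^2}{3} \cdot \frac{S_n^2 - 3S_n + 3}{S_n^2(S_n - 1)},
\]
which matches $A(n)$ after combining the latter over the common denominator $S_n^2(S_n - 1)$. A similar but longer differentiation gives $-L''(n)/2 = \delta(n)^2$ exactly.

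For the size estimates, note $L(n) = F(S_n) + \log\g$ with $F(S) = S - 3\log S + \log(S - 1)$. Then $F^{(\ell)}(S) = O(1)$ for $\ell \gqs 1$ and $S_n^{(i)}(n) = O(n^{1/2 - i})$, so a Fa\`a di Bruno bookkeeping yields $L^{(k)}(n) = O(n^{-(2k-1)/2})$ for $k \gqs 1$. One checks by expansion that $\delta(n) \sim c_0 n^{-3/4}$, with $c_0$ matching the leading constant of $\delta^*(n)$ in \e{part-ads}. Hence $g_k(n)/\delta(n)^k = O(n^{(2 - k)/4}) = o(1)$ for $k \gqs 3$, and the remainder satisfies $R_m(n, j)/\delta(n)^{m+1} = O(n^{(1 - m)/4}) = o(1)$ for $m \gqs 2$ and bounded $j$. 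Thus \e{hjk} holds for every $m \gqs 2$.

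The Hermite-Jensen statement with data $\{A(n), -1, \delta(n)\}$ is then immediate from Theorem \ref{vog}. To replace this by the simpler $\{A^*(n), -1, \delta^*(n)\}$, I would expand $A(n)$ and $\delta(n)$ for large $n$ using $1/(S_n - 1) = 1/S_n + 1/S_n^2 + \cdots$ to show $A^*(n) - A(n) = O(1/n) = o(\delta(n))$ and $\delta^*(n)/\delta(n) \to 1$; Theorem \ref{vog2} then delivers the conclusion. The only place calculation really bites is the pair of algebraic identities $L'(n) = A(n)$ and $-L''(n)/2 = \delta(n)^2$; the rest of the argument is routine asymptotic bookkeeping.
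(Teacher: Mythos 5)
Your proposal follows essentially the same route as the paper: expand the logarithm of the Hardy--Ramanujan main term as a polynomial in $j$, identify $A(n)$, $\delta(n)^2$, $g_k(n)$ with the Taylor coefficients, check the closed forms and the size conditions of Definition \ref{dbf}, and then invoke Theorems \ref{vog} and \ref{vog2}. Indeed your $L(n)$ equals the paper's $F(\g n')+\log\g$, so your coefficients $L^{(r)}(n)/r!$ coincide exactly with the paper's $\g^r F^{(r)}(T)/r!$, and your verification that $L'(n)$ and $-L''(n)/2$ reduce to \e{part-ad} is the same algebra. The genuine differences are technical: you control the remainder by real-variable Taylor with Lagrange remainder where the paper uses the complex-analytic Lemma \ref{tay}, and you obtain the decay of $L^{(k)}(n)$ by Fa\`a di Bruno where the paper differentiates an asymptotic expansion via Lemma \ref{diff}. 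Both devices are legitimate and yield the same estimates; the paper's route has the advantage of producing the full asymptotic expansions of the derivatives (used again for $\overline p(n)$ and $b_k(n)$), while yours is more elementary and self-contained.

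One step, as written, does not support its conclusion. You claim that $F^{(\ell)}(S)=O(1)$ for $\ell\gqs 1$ together with $S_n^{(i)}(n)=O(n^{1/2-i})$ gives $L^{(k)}(n)=O(n^{-(2k-1)/2})$. It does not: the Fa\`a di Bruno term with $k$ first-derivative factors is $F^{(k)}(S_n)\,(S_n')^k$, which under the bound $F^{(k)}=O(1)$ is only $O(n^{-k/2})$, and $O(n^{-k/2})/\delta(n)^k=O(n^{k/4})$ diverges, so the crude bound would destroy the condition $g_k(n)=o(\delta(n)^k)$. What rescues the argument is that for $F(S)=S-3\log S+\log(S-1)$ one has $F'(S)=1+O(S^{-1})$ and $F^{(\ell)}(S)=O(S^{-\ell})$ for $\ell\gqs 2$; with the input $F^{(\ell)}(S)=O(S^{1-\ell})$ every Fa\`a di Bruno term is $O(n^{1/2-k})$ and your stated conclusion $L^{(k)}(n)=O(n^{-(2k-1)/2})$ (matching the paper's \e{sf3}) follows. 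This is an easy repair, but it must be made: the whole point of the log-polynomial definition is the hierarchy $g_k(n)=o(\delta(n)^k)$, and it is exactly here that the decay of the higher derivatives of $F$, not just their boundedness, is needed. The rest of the proposal --- absorption of the exponentially small error, the remainder estimate $O(n^{(1-m)/4})$ for $m\gqs 2$, and the passage to $\{A^*(n),-1,\delta^*(n)\}$ via Theorem \ref{vog2} --- is correct.
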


Theorem \ref{partit} is proved in section \ref{p-proof}. It agrees with the results for $p(n)$ in sections 3 and 6 of \cite{GORZ}, giving a more precise statement. See also \cite[Eq. (1.1)]{Lar}, though this is stated incorrectly.
Theorem \ref{opartit} gives the corresponding result for  overpartitions.
For $k$-regular partitions we have
%the same, needing the simpler functions $\hat A(n)=\pi/(2\sqrt{n})$, $\hat \delta(n) =\sqrt{\pi}/(2\sqrt{2}\root 4 \of{n^3})$.

\begin{theorem} \label{kreg}
Fix an integer $k\gqs 2$ and let $G(z)$ be the Bessel ratio $I_0(z)/I_1(z)$. With \e{ntn}, set
\begin{equation} \label{mess}
   A(n):=\frac{1}2 \sqrt{\frac{k'}{n''}}G\left( \sqrt{k' n''}\right) -\frac 1{n''}, \qquad  \delta(n) := \frac{1}{2n''}\left( \frac{k' n''}{2}\left(G\left( \sqrt{k' n''}\right)^2 -1 \right) - 2\right)^{1/2}.
\end{equation}
Then for every  degree $m \gqs 2$ the $k$-regular partition sequence $b_k(n)$ is log-polynomial with data $\{A(n)$, $-1$, $\delta(n)\}$. Hence $b_k(n)$ is also Hermite-Jensen for all positive integer degrees and the same data or, more simply, $\{A^*(n)$, $-1$, $\delta^*(n)\}$ for
\begin{equation} \label{pgb}
   A^*(n):=\frac{\pi (1-1/k )^{1/2}}{6^{1/2} n^{1/2}}, \qquad  \delta^*(n) := \frac{\sqrt{\pi}(1-1/k )^{1/4}}{2 \cdot 6^{1/4} n^{3/4}}.
\end{equation}
\end{theorem}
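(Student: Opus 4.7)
Extend the notation of \e{ntn} by writing $x'' := x + (k-1)/24$ and introduce the smooth function
\begin{equation*}
  f(x) := \tfrac{1}{2}\log\frac{k'}{4k\,x''} + \log I_1\!\left(\sqrt{k' x''}\right),
\end{equation*}
so that \e{bkas} may be rewritten as $\log b_k(n) = f(n) + O(e^{-\varepsilon_k\sqrt{n}})$. Setting $u := \sqrt{k' x''}$ and using the Bessel identities $I_0'(u) = I_1(u)$ and $I_1'(u) = I_0(u) - I_1(u)/u$, together with the derived identity $G'(u) = 1 - G(u)^2 + G(u)/u$, a direct chain-rule computation yields
\begin{equation*}
  f'(x) = \frac{k'\,G(u)}{2u} - \frac{k'}{u^2}, \qquad f''(x) = \frac{(k')^2\bigl(1 - G(u)^2\bigr)}{4u^2} + \frac{(k')^2}{u^4}.
\end{equation*}
These rearrange precisely to $A(n) = f'(n)$ and $\delta(n)^2 = -\tfrac{1}{2}f''(n)$ as in \e{mess}; in particular $\kappa = -1$.

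Next I Taylor-expand $f$ to order $m$ around $n$: for $j \in \{1, 2, \dots, m+1\}$,
\begin{equation*}
  \log\frac{b_k(n+j)}{b_k(n)} = \sum_{\ell=1}^{m} \frac{f^{(\ell)}(n)}{\ell!}\,j^\ell + \frac{f^{(m+1)}(\xi_{n,j})}{(m+1)!}\,j^{m+1} + O\bigl(e^{-\varepsilon_k \sqrt{n}}\bigr)
\end{equation*}
for some $\xi_{n,j} \in [n, n+m+1]$. Declaring $g_\ell(n) := f^{(\ell)}(n)/\ell!$ for $3 \le \ell \le m$ produces the shape of \e{hjk}; it remains to verify the size constraints in Definition \ref{dbf}.

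This verification is the main technical obstacle. Using the standard asymptotic $G(u) = 1 + 1/(2u) + O(u^{-2})$ as $u \to \infty$, an induction on $\ell$ shows that each application of $d/dx$ to $u^{-s}$ or $G(u)\,u^{-s}$ raises the negative power of $u$ by $2$, so that
\begin{equation*}
  f^{(\ell)}(x) = O\bigl(u^{-(2\ell-1)}\bigr) = O\bigl(n^{-\ell+1/2}\bigr) \qquad (\ell \ge 1)
\end{equation*}
uniformly for $x \in [n, n+m+1]$. Since $\delta(n) \asymp n^{-3/4}$, the inequality $\ell - \tfrac{1}{2} > \tfrac{3\ell}{4}$ is strict for $\ell \ge 3$, giving both $g_\ell(n) = o(\delta(n)^\ell)$ on the required range and, at $\ell = m+1$ under the hypothesis $m \ge 2$, the bound $o(\delta(n)^{m+1})$ for the Taylor remainder. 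The exponential error from \e{bkas} is absorbed trivially.

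This establishes the log-polynomial property of degree $m$, for every $m \ge 2$, with data $\{A(n), -1, \delta(n)\}$, and Theorem \ref{vog} then yields the Hermite-Jensen property for all $d \ge 1$. For the simpler data, substituting $G(u) = 1 + O(u^{-1})$ and $x'' = n(1 + O(n^{-1}))$ into \e{mess} gives $A(n) = A^*(n) + O(n^{-1})$ and $\delta(n) = \delta^*(n)(1 + o(1))$. Since $O(n^{-1}) = o(n^{-3/4}) = o(\delta(n))$, the hypotheses \e{tre} of Theorem \ref{vog2} are satisfied and the Hermite-Jensen conclusion transfers to $\{A^*(n), -1, \delta^*(n)\}$.
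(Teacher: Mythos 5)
Your setup is sound and matches the paper's strategy: both arguments feed the asymptotic \e{bkas} into a Taylor expansion of the logarithm, identify $A(n)=f'(n)$ and $\delta(n)^2=-\tfrac12 f''(n)$ via the Riccati relation $G'(u)=1-G(u)^2+G(u)/u$, and finish with Theorems \ref{vog} and \ref{vog2}. Your formulas for $f'$, $f''$ and the reduction to \e{mess} and \e{pgb} all check out. The gap is exactly at the step you flag as the main technical obstacle: the claimed induction showing $f^{(\ell)}(x)=O\bigl(u^{-(2\ell-1)}\bigr)$ does not close. The class of terms $u^{-s}$ and $G(u)u^{-s}$ is not stable under $d/dx$ (differentiating $G(u)u^{-s}$ produces $(1-G(u)^2)u^{-s-1}$, hence $G^2$ terms), and, more seriously, the order gain of two per derivative is not a term-by-term phenomenon: it depends on cancellations between terms. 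Concretely, $f''(x)=\tfrac{(k')^2}{4}(1-G(u)^2)u^{-2}+(k')^2u^{-4}$, and
\begin{equation*}
\frac{d}{du}\Bigl[(1-G(u)^2)u^{-2}\Bigr]=-2G(1-G^2)u^{-2}-2G^2u^{-3}-2(1-G^2)u^{-3}
=2u^{-3}-2u^{-3}+O(u^{-4})=O(u^{-4}),
\end{equation*}
where the two $u^{-3}$ contributions cancel. A term-by-term order count sees only $O(u^{-3})$ here, giving $f'''=O(u^{-4})=O(n^{-2})$, and $n^{-2}$ is \emph{not} $o(\delta(n)^3)=o(n^{-9/4})$ --- so without these cancellations the bound you obtain is too weak to verify the very condition $g_\ell(n)=o(\delta(n)^\ell)$ you are after. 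These cancellations recur at every order.

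The paper's resolution is Lemma \ref{diff}. Since each $F^{(r)}$ is a polynomial in $G(\sqrt{z})$ and $z^{-1/2}$, it possesses an asymptotic expansion in powers of $z^{-1/2}$ by \e{gex2}; Lemma \ref{diff} (proved by integrating back and using uniqueness of asymptotic expansions) then guarantees that the expansion of $F^{(r)}$ is the term-by-term derivative of that of $F^{(r-1)}$. Starting from \e{chum}, whose leading term is $\tfrac12 z^{-1/2}$, each differentiation shifts the leading exponent down by one full power of $z$, which packages all the cancellations above automatically and yields $F^{(r)}(z)=O(z^{-r+1/2})$, i.e.\ \e{wro}. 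To repair your argument you either need to import Lemma \ref{diff} (or an equivalent justification for differentiating the asymptotic expansion of $G$, such as the Cauchy-estimate route of Lemma \ref{tay} applied in a complex neighbourhood), or verify by hand that the coefficients of $u^{-1},\dots,u^{-(2\ell-2)}$ in the $G$-expansion of $f^{(\ell)}$ all vanish; the bald induction as stated proves neither. The remaining parts of your write-up --- the Lagrange remainder, the exponent comparison $\ell-\tfrac12>\tfrac{3\ell}{4}$ for $\ell\gqs 3$, and the passage to the simpler data via \e{tre} --- are correct once this bound is in place.
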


Theorem \ref{kreg} extends and clarifies the main result of  \cite{CP} as discussed in section \ref{kregpa}.   Theorem \ref{redct} then implies

\begin{cor} \label{krak}
The sequences $p(n)$, $\overline p(n)$ and $b_k(n)$ are asymptotically strictly log-concave.
For fixed  $d$, the Jensen polynomials $J^{d,n}_{p}(X)$, $J^{d,n}_{\overline p}(X)$ and $J^{d,n}_{b_k}(X)$ each have distinct real roots for all $n$ sufficiently large.
\end{cor}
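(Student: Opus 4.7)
The plan is to deduce Corollary \ref{krak} directly from Theorem \ref{redct}, once we have the log-polynomial data for each of the three partition sequences. Theorems \ref{partit} and \ref{kreg} supply this data for $p(n)$ and $b_k(n)$ respectively, with $\kappa=-1$ and valid for every degree $m\gqs 2$; the analogous Theorem \ref{opartit} alluded to just after Theorem \ref{partit} does the same for $\overline p(n)$.

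For the first assertion, I would fix any $m\gqs 2$, say $m=2$. Each of the three theorems certifies that the corresponding sequence is log-polynomial of degree $m$ with $\kappa=-1$, so part (i) of Theorem \ref{redct} yields asymptotic strict log-concavity for $p(n)$, $\overline p(n)$, and $b_k(n)$ at once.

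For the second assertion, fix a positive integer $d$. I would invoke Theorems \ref{partit}, \ref{opartit}, and \ref{kreg} with $m:=\max(d-1,2)$, ensuring $1\lqs d \lqs m+1$ so that part (ii) of Theorem \ref{redct} is applicable. Since $\kappa=-1$ in all three cases, that part guarantees, for each $\alpha\in\{p,\overline p,b_k\}$, that $J^{d,n}_\alpha(X)$ has $d$ distinct real zeros for all $n$ sufficiently large.

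The main obstacle is not in this deduction, which is merely a packaging of Theorem \ref{redct}. The real work sits upstream: verifying the log-polynomial expansion \e{hjk} to arbitrary order for each partition function, which requires expanding the asymptotic estimates \e{hard2}, \e{ohard2}, and \e{bkas} far enough to extract the data $A(n)$, $\delta(n)$ and the tail coefficients $g_k(n)$ with the correct size controls $\delta(n)\to 0$ and $g_k(n)=o(\delta(n)^k)$. That analysis is the content of Theorems \ref{partit}, \ref{opartit}, and \ref{kreg}, carried out in the sections that follow.
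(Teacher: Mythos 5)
Your proposal is correct and matches the paper's (implicit) argument exactly: the corollary is stated as an immediate consequence of Theorem \ref{redct} applied to the log-polynomial data furnished by Theorems \ref{partit}, \ref{opartit} and \ref{kreg}, all with $\kappa=-1$ and valid for every degree $m\gqs 2$. Your choice of $m=\max(d-1,2)$ to ensure $1\lqs d\lqs m+1$ is the right bookkeeping, and your remark that the real work lies in establishing the log-polynomial expansions upstream is precisely how the paper organizes matters.
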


The asymptotic strict log-concavity of these sequences may also be shown more directly from \e{hard2}, \e{ohard2} and \e{bkas}, reducing to the fact that $\sqrt{n}$ is concave. By using more precise asymptotic expansions, it is proved in \cite{Ni78} and \cite{DP} that $p(n)$ is log-concave (i.e. \e{logc} holds) exactly for $n\gqs 25$, and shown in \cite{Eng} that $\overline p(n)$ is  log-concave for all $n$.  The precise $n$s
 for which $J^{d,n}_{p}(X)$ has real roots are given in \cite{CJW} for $d=3$ and also in \cite{Lar} for $d=3,4,5$.

%*************************************
% Graphics waves

\SpecialCoor
\psset{griddots=5,subgriddiv=0,gridlabels=0pt}
\psset{xunit=1cm, yunit=0.0003125cm}
\psset{linewidth=1pt}
\psset{dotsize=2pt 0,dotstyle=*}

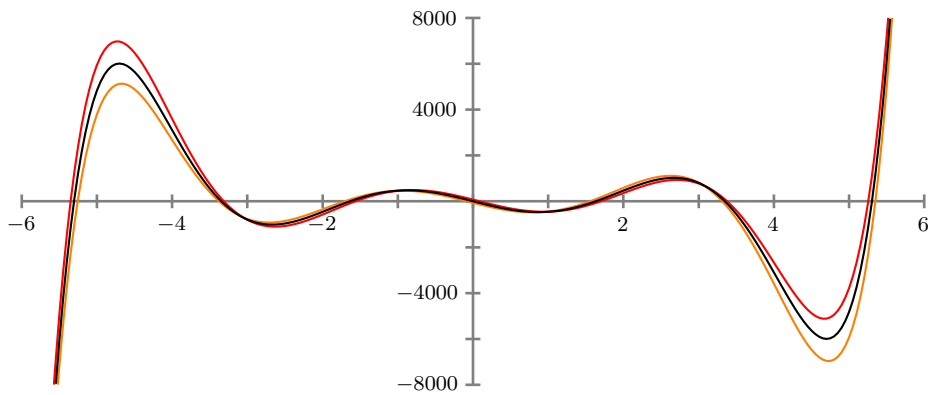
\begin{figure}[ht]
\begin{center}
\begin{pspicture}(-6,-8000)(6,8000) %\psgrid

\psline[linecolor=gray](-6,0)(6,0)
\multirput(-6,-300)(1,0){13}{\psline[linecolor=gray](0,0)(0,600)}
\psline[linecolor=gray](0,-8000)(0,8000)
\multirput(-0.1,-8000)(0,2000){9}{\psline[linecolor=gray](0,0)(0.2,0)}

\rput(-6,-1000){$_{-6}$}
\rput(-4,-1000){$_{-4}$}
\rput(-2,-1000){$_{-2}$}
\rput(6,-1000){$_{6}$}
\rput(4,-1000){$_{4}$}
\rput(2,-1000){$_{2}$}

\rput(-0.6,-8000){$_{-8000}$}
\rput(-0.6,-4000){$_{-4000}$}
\rput(-0.5,8000){$_{8000}$}
\rput(-0.5,4000){$_{4000}$}

\savedata{\mydata}[
{{-5.5198, -7998.57}, {-5.5, -7217.57}, {-5.45, -5389.18}, {-5.4, -3755.6}, {-5.35,
-2303.37}, {-5.3, -1019.65}, {-5.25, 107.776}, {-5.2,
  1090.51}, {-5.15, 1939.57}, {-5.1, 2665.43}, {-5.05, 3277.99}, {-5.,
   3786.64}, {-4.95, 4200.26}, {-4.9, 4527.24}, {-4.85,
  4775.51}, {-4.8, 4952.53}, {-4.75, 5065.32}, {-4.7, 5120.5}, {-4.65,
   5124.25}, {-4.6, 5082.41}, {-4.55, 5000.4}, {-4.5, 4883.3}, {-4.45,
   4735.87}, {-4.4, 4562.49}, {-4.35, 4367.27}, {-4.3,
  4153.99}, {-4.25, 3926.17}, {-4.2, 3687.02}, {-4.15, 3439.5}, {-4.1,
   3186.34}, {-4.05, 2930.01}, {-4., 2672.76}, {-3.95,
  2416.61}, {-3.9, 2163.4}, {-3.85, 1914.77}, {-3.8, 1672.15}, {-3.75,
   1436.83}, {-3.7, 1209.93}, {-3.65, 992.395}, {-3.6,
  785.05}, {-3.55, 588.569}, {-3.5, 403.505}, {-3.45, 230.29}, {-3.4,
  69.243}, {-3.35, -79.417}, {-3.3, -215.568}, {-3.25, -339.178},
{-3.2, -450.294}, {-3.15, -549.039}, {-3.1, -635.604}, {-3.05,
-710.243}, {-3., -773.266}, {-2.95, -825.032}, {-2.9, -865.945},
{-2.85, -896.45}, {-2.8, -917.027}, {-2.75, -928.183}, {-2.7,
-930.452}, {-2.65, -924.39}, {-2.6, -910.566}, {-2.55, -889.566},
{-2.5, -861.983}, {-2.45, -828.413}, {-2.4, -789.459}, {-2.35,
-745.719}, {-2.3, -697.789}, {-2.25, -646.258}, {-2.2, -591.704},
{-2.15, -534.696}, {-2.1, -475.786}, {-2.05, -415.513}, {-2.,
-354.397}, {-1.95, -292.936}, {-1.9, -231.611}, {-1.85, -170.876},
{-1.8, -111.165}, {-1.75, -52.8849}, {-1.7, 3.58336}, {-1.65,
  57.8852}, {-1.6, 109.693}, {-1.55, 158.709}, {-1.5,
  204.662}, {-1.45, 247.31}, {-1.4, 286.441}, {-1.35, 321.871}, {-1.3,
   353.445}, {-1.25, 381.038}, {-1.2, 404.553}, {-1.15,
  423.92}, {-1.1, 439.1}, {-1.05, 450.078}, {-1., 456.867}, {-0.95,
  459.506}, {-0.9, 458.061}, {-0.85, 452.618}, {-0.8,
  443.291}, {-0.75, 430.213}, {-0.7, 413.539}, {-0.65,
  393.446}, {-0.6, 370.129}, {-0.55, 343.798}, {-0.5,
  314.685}, {-0.45, 283.031}, {-0.4, 249.096}, {-0.35,
  213.148}, {-0.3, 175.469}, {-0.25, 136.349}, {-0.2,
  96.0859}, {-0.15, 54.9837}, {-0.1,
  13.3516}, {-0.05, -28.4977}, {0., -70.2504}, {0.05, -111.592},
{0.1, -152.21}, {0.15, -191.796}, {0.2, -230.042}, {0.25, -266.652},
{0.3, -301.333}, {0.35, -333.805}, {0.4, -363.794}, {0.45, -391.043},
{0.5, -415.307}, {0.55, -436.354}, {0.6, -453.97}, {0.65, -467.961},
{0.7, -478.148}, {0.75, -484.376}, {0.8, -486.508}, {0.85, -484.431},
{0.9, -478.058}, {0.95, -467.323}, {1., -452.188}, {1.05, -432.64},
{1.1, -408.694}, {1.15, -380.393}, {1.2, -347.809}, {1.25, -311.042},
{1.3, -270.222}, {1.35, -225.509}, {1.4, -177.092}, {1.45, -125.192},
{1.5, -70.06}, {1.55, -11.9753}, {1.6, 48.7512}, {1.65,
  111.779}, {1.7, 176.74}, {1.75, 243.235}, {1.8, 310.839}, {1.85,
  379.099}, {1.9, 447.536}, {1.95, 515.647}, {2., 582.904}, {2.05,
  648.758}, {2.1, 712.638}, {2.15, 773.956}, {2.2, 832.105}, {2.25,
  886.464}, {2.3, 936.401}, {2.35, 981.271}, {2.4, 1020.42}, {2.45,
  1053.2}, {2.5, 1078.94}, {2.55, 1097.}, {2.6, 1106.71}, {2.65,
  1107.44}, {2.7, 1098.56}, {2.75, 1079.44}, {2.8, 1049.49}, {2.85,
  1008.14}, {2.9, 954.852}, {2.95, 889.113}, {3., 810.457}, {3.05,
  718.457}, {3.1, 612.739}, {3.15, 492.985}, {3.2, 358.937}, {3.25,
  210.405}, {3.3,
  47.2748}, {3.35, -130.488}, {3.4, -322.829}, {3.45, -529.599},
{3.5, -750.544}, {3.55, -985.301}, {3.6, -1233.39}, {3.65, -1494.19},
{3.7, -1766.97}, {3.75, -2050.84}, {3.8, -2344.74}, {3.85, -2647.47},
{3.9, -2957.64}, {3.95, -3273.7}, {4., -3593.89}, {4.05, -3916.23},
{4.1, -4238.55}, {4.15, -4558.44}, {4.2, -4873.26}, {4.25, -5180.1},
{4.3, -5475.82}, {4.35, -5756.97}, {4.4, -6019.83}, {4.45, -6260.37},
{4.5, -6474.25}, {4.55, -6656.79}, {4.6, -6802.99}, {4.65, -6907.46},
{4.7, -6964.45}, {4.75, -6967.83}, {4.8, -6911.05}, {4.85, -6787.14},
{4.9, -6588.7}, {4.95, -6307.87}, {5., -5936.3}, {5.05, -5465.19},
{5.1, -4885.19}, {5.15, -4186.46}, {5.2, -3358.57}, {5.25, -2390.57},
{5.3, -1270.89}, {5.35, 12.6205}, {5.4, 1472.75}, {5.45,
  3122.94}, {5.5, 4977.28}, {5.55, 7050.57}, {5.5712, 7999.5}}
  ]
\dataplot[linecolor=orange,linewidth=0.8pt,plotstyle=line]{\mydata}

\savedata{\mydata}[
{{-5.5712, -7996.53}, {-5.55, -7047.7}, {-5.5, -4974.62}, {-5.45, -3120.47}, {-5.4,
-1470.48}, {-5.35, -10.5224}, {-5.3, 1272.82}, {-5.25,
  2392.34}, {-5.2, 3360.2}, {-5.15, 4187.94}, {-5.1, 4886.55}, {-5.05,
   5466.42}, {-5., 5937.42}, {-4.95, 6308.88}, {-4.9,
  6589.61}, {-4.85, 6787.96}, {-4.8, 6911.78}, {-4.75,
  6968.49}, {-4.7, 6965.03}, {-4.65, 6907.97}, {-4.6,
  6803.43}, {-4.55, 6657.18}, {-4.5, 6474.58}, {-4.45,
  6260.65}, {-4.4, 6020.06}, {-4.35, 5757.16}, {-4.3,
  5475.97}, {-4.25, 5180.22}, {-4.2, 4873.35}, {-4.15, 4558.5}, {-4.1,
   4238.58}, {-4.05, 3916.24}, {-4., 3593.88}, {-3.95,
  3273.68}, {-3.9, 2957.61}, {-3.85, 2647.41}, {-3.8,
  2344.67}, {-3.75, 2050.76}, {-3.7, 1766.89}, {-3.65,
  1494.11}, {-3.6, 1233.29}, {-3.55, 985.204}, {-3.5,
  750.444}, {-3.45, 529.497}, {-3.4, 322.726}, {-3.35,
  130.385}, {-3.3, -47.3771}, {-3.25, -210.506}, {-3.2, -359.036},
{-3.15, -493.082}, {-3.1, -612.833}, {-3.05, -718.547}, {-3.,
-810.543}, {-2.95, -889.196}, {-2.9, -954.931}, {-2.85, -1008.22},
{-2.8, -1049.56}, {-2.75, -1079.5}, {-2.7, -1098.62}, {-2.65,
-1107.5}, {-2.6, -1106.77}, {-2.55, -1097.05}, {-2.5, -1078.99},
{-2.45, -1053.24}, {-2.4, -1020.46}, {-2.35, -981.301}, {-2.3,
-936.428}, {-2.25, -886.488}, {-2.2, -832.125}, {-2.15, -773.972},
{-2.1, -712.652}, {-2.05, -648.768}, {-2., -582.912}, {-1.95,
-515.652}, {-1.9, -447.539}, {-1.85, -379.1}, {-1.8, -310.838},
{-1.75, -243.232}, {-1.7, -176.735}, {-1.65, -111.773}, {-1.6,
-48.7442}, {-1.55, 11.9833}, {-1.5, 70.0688}, {-1.45, 125.202}, {-1.4,
   177.102}, {-1.35, 225.519}, {-1.3, 270.233}, {-1.25,
  311.053}, {-1.2, 347.82}, {-1.15, 380.404}, {-1.1, 408.704}, {-1.05,
   432.649}, {-1., 452.197}, {-0.95, 467.332}, {-0.9,
  478.066}, {-0.85, 484.439}, {-0.8, 486.515}, {-0.75,
  484.382}, {-0.7, 478.154}, {-0.65, 467.966}, {-0.6,
  453.975}, {-0.55, 436.358}, {-0.5, 415.31}, {-0.45, 391.046}, {-0.4,
   363.797}, {-0.35, 333.807}, {-0.3, 301.335}, {-0.25,
  266.653}, {-0.2, 230.043}, {-0.15, 191.796}, {-0.1,
  152.211}, {-0.05, 111.592}, {0, 70.2504}, {0.05,
  28.4977}, {0.1, -13.3516}, {0.15, -54.9835}, {0.2, -96.0856},
{0.25, -136.349}, {0.3, -175.469}, {0.35, -213.147}, {0.4, -249.094},
{0.45, -283.029}, {0.5, -314.682}, {0.55, -343.795}, {0.6, -370.125},
{0.65, -393.442}, {0.7, -413.534}, {0.75, -430.207}, {0.8, -443.284},
{0.85, -452.611}, {0.9, -458.053}, {0.95, -459.498}, {1., -456.858},
{1.05, -450.068}, {1.1, -439.09}, {1.15, -423.91}, {1.2, -404.542},
{1.25, -381.027}, {1.3, -353.434}, {1.35, -321.859}, {1.4, -286.429},
{1.45, -247.299}, {1.5, -204.651}, {1.55, -158.698}, {1.6, -109.683},
{1.65, -57.8756}, {1.7, -3.57467}, {1.75, 52.8926}, {1.8,
  111.172}, {1.85, 170.881}, {1.9, 231.614}, {1.95, 292.938}, {2.,
  354.397}, {2.05, 415.511}, {2.1, 475.782}, {2.15, 534.688}, {2.2,
  591.694}, {2.25, 646.245}, {2.3, 697.773}, {2.35, 745.7}, {2.4,
  789.436}, {2.45, 828.387}, {2.5, 861.952}, {2.55, 889.532}, {2.6,
  910.528}, {2.65, 924.348}, {2.7, 930.406}, {2.75, 928.132}, {2.8,
  916.972}, {2.85, 896.391}, {2.9, 865.882}, {2.95, 824.964}, {3.,
  773.195}, {3.05, 710.169}, {3.1, 635.526}, {3.15, 548.957}, {3.2,
  450.209}, {3.25, 339.091}, {3.3, 215.479}, {3.35,
  79.3265}, {3.4, -69.3344}, {3.45, -230.381}, {3.5, -403.597},
{3.55, -588.659}, {3.6, -785.137}, {3.65, -992.479}, {3.7, -1210.01},
{3.75, -1436.91}, {3.8, -1672.21}, {3.85, -1914.82}, {3.9, -2163.45},
{3.95, -2416.65}, {4., -2672.78}, {4.05, -2930.02}, {4.1, -3186.33},
{4.15, -3439.47}, {4.2, -3686.96}, {4.25, -3926.08}, {4.3, -4153.88},
{4.35, -4367.12}, {4.4, -4562.31}, {4.45, -4735.64}, {4.5, -4883.04},
{4.55, -5000.08}, {4.6, -5082.04}, {4.65, -5123.83}, {4.7, -5120.01},
{4.75, -5064.77}, {4.8, -4951.91}, {4.85, -4774.81}, {4.9, -4526.46},
{4.95, -4199.38}, {5., -3785.67}, {5.05, -3276.91}, {5.1, -2664.24},
{5.15, -1938.27}, {5.2, -1089.08}, {5.25, -106.211}, {5.3,
  1021.36}, {5.35, 2305.23}, {5.4, 3757.63}, {5.45, 5391.37}, {5.5,
  7219.96}, {5.5197, 7997.}}
  ]
\dataplot[linecolor=red,linewidth=0.8pt,plotstyle=line]{\mydata}

\savedata{\mydata}[
{{-5.545, -7959.39}, {-5.5, -6121.59}, {-5.45, -4282.18}, {-5.4, -2642.09}, {-5.35,
-1187.52}, {-5.3, 94.6531}, {-5.25, 1216.92}, {-5.2, 2191.16}, {-5.15,
   3028.66}, {-5.1, 3740.12}, {-5.05, 4335.69}, {-5., 4825.}, {-4.95,
  5217.14}, {-4.9, 5520.72}, {-4.85, 5743.85}, {-4.8,
  5894.19}, {-4.75, 5978.96}, {-4.7, 6004.93}, {-4.65,
  5978.48}, {-4.6, 5905.56}, {-4.55, 5791.79}, {-4.5,
  5642.37}, {-4.45, 5462.18}, {-4.4, 5255.75}, {-4.35,
  5027.31}, {-4.3, 4780.74}, {-4.25, 4519.67}, {-4.2,
  4247.42}, {-4.15, 3967.05}, {-4.1, 3681.36}, {-4.05, 3392.9}, {-4.,
  3104.}, {-3.95, 2816.76}, {-3.9, 2533.08}, {-3.85, 2254.65}, {-3.8,
  1982.97}, {-3.75, 1719.37}, {-3.7, 1465.01}, {-3.65,
  1220.89}, {-3.6, 987.858}, {-3.55, 766.62}, {-3.5, 557.758}, {-3.45,
   361.725}, {-3.4, 178.863}, {-3.35,
  9.40359}, {-3.3, -146.519}, {-3.25, -288.864}, {-3.2, -417.672},
{-3.15, -533.066}, {-3.1, -635.238}, {-3.05, -724.444}, {-3., -801.},
{-2.95, -865.275}, {-2.9, -917.685}, {-2.85, -958.688}, {-2.8,
-988.778}, {-2.75, -1008.48}, {-2.7, -1018.35}, {-2.65, -1018.97},
{-2.6, -1010.92}, {-2.55, -994.823}, {-2.5, -971.289}, {-2.45,
-940.947}, {-2.4, -904.425}, {-2.35, -862.354}, {-2.3, -815.358},
{-2.25, -764.06}, {-2.2, -709.07}, {-2.15, -650.99}, {-2.1,
-590.406}, {-2.05, -527.892}, {-2., -464.}, {-1.95, -399.267}, {-1.9,
-334.206}, {-1.85, -269.309}, {-1.8, -205.043}, {-1.75, -141.853},
{-1.7, -80.1539}, {-1.65, -20.3361}, {-1.6, 37.2384}, {-1.55,
  92.2359}, {-1.5, 144.352}, {-1.45, 193.31}, {-1.4, 238.865}, {-1.35,
   280.8}, {-1.3, 318.928}, {-1.25, 353.093}, {-1.2, 383.166}, {-1.15,
   409.049}, {-1.1, 430.673}, {-1.05, 447.994}, {-1., 461.}, {-0.95,
  469.703}, {-0.9, 474.142}, {-0.85, 474.383}, {-0.8,
  470.513}, {-0.75, 462.646}, {-0.7, 450.917}, {-0.65,
  435.482}, {-0.6, 416.518}, {-0.55, 394.221}, {-0.5,
  368.805}, {-0.45, 340.499}, {-0.4, 309.548}, {-0.35,
  276.212}, {-0.3, 240.762}, {-0.25, 203.478}, {-0.2,
  164.653}, {-0.15, 124.586}, {-0.1, 83.5804}, {-0.05, 41.9475}, {0.,
  0.}, {0.05, -41.9475}, {0.1, -83.5804}, {0.15, -124.586}, {0.2,
-164.653}, {0.25, -203.478}, {0.3, -240.762}, {0.35, -276.212}, {0.4,
-309.548}, {0.45, -340.499}, {0.5, -368.805}, {0.55, -394.221}, {0.6,
-416.518}, {0.65, -435.482}, {0.7, -450.917}, {0.75, -462.646}, {0.8,
-470.513}, {0.85, -474.383}, {0.9, -474.142}, {0.95, -469.703}, {1.,
-461.}, {1.05, -447.994}, {1.1, -430.673}, {1.15, -409.049}, {1.2,
-383.166}, {1.25, -353.093}, {1.3, -318.928}, {1.35, -280.8}, {1.4,
-238.865}, {1.45, -193.31}, {1.5, -144.352}, {1.55, -92.2359}, {1.6,
-37.2384}, {1.65, 20.3361}, {1.7, 80.1539}, {1.75, 141.853}, {1.8,
  205.043}, {1.85, 269.309}, {1.9, 334.206}, {1.95, 399.267}, {2.,
  464.}, {2.05, 527.892}, {2.1, 590.406}, {2.15, 650.99}, {2.2,
  709.07}, {2.25, 764.06}, {2.3, 815.358}, {2.35, 862.354}, {2.4,
  904.425}, {2.45, 940.947}, {2.5, 971.289}, {2.55, 994.823}, {2.6,
  1010.92}, {2.65, 1018.97}, {2.7, 1018.35}, {2.75, 1008.48}, {2.8,
  988.778}, {2.85, 958.688}, {2.9, 917.685}, {2.95, 865.275}, {3.,
  801.}, {3.05, 724.444}, {3.1, 635.238}, {3.15, 533.066}, {3.2,
  417.672}, {3.25, 288.864}, {3.3,
  146.519}, {3.35, -9.40359}, {3.4, -178.863}, {3.45, -361.725},
{3.5, -557.758}, {3.55, -766.62}, {3.6, -987.858}, {3.65, -1220.89},
{3.7, -1465.01}, {3.75, -1719.37}, {3.8, -1982.97}, {3.85, -2254.65},
{3.9, -2533.08}, {3.95, -2816.76}, {4., -3104.}, {4.05, -3392.9},
{4.1, -3681.36}, {4.15, -3967.05}, {4.2, -4247.42}, {4.25, -4519.67},
{4.3, -4780.74}, {4.35, -5027.31}, {4.4, -5255.75}, {4.45, -5462.18},
{4.5, -5642.37}, {4.55, -5791.79}, {4.6, -5905.56}, {4.65, -5978.48},
{4.7, -6004.93}, {4.75, -5978.96}, {4.8, -5894.19}, {4.85, -5743.85},
{4.9, -5520.72}, {4.95, -5217.14}, {5., -4825.}, {5.05, -4335.69},
{5.1, -3740.12}, {5.15, -3028.66}, {5.2, -2191.16}, {5.25, -1216.92},
{5.3, -94.6531}, {5.35, 1187.52}, {5.4, 2642.09}, {5.45,
  4282.18}, {5.5, 6121.59}, {5.545, 7959.39}}
  ]
\dataplot[linecolor=black,linewidth=0.8pt,plotstyle=line]{\mydata}

\end{pspicture}
\caption{Renormalized Jensen polynomials for $b_2(n)$ approaching $H_7(X/2)$ \label{b2fig}}
\end{center}
\end{figure}

%************************************

Figure \ref{b2fig} illustrates Theorem \ref{kreg} for $k=2$ and the sequence of partitions into distinct parts. The graphs of the renormalized Jensen polynomials on the left of \e{hjak} are displayed for $d=7$ and $n=10^7$ with $A(n)$ and $\delta(n)$ given by \e{pgb}. They appear close to the graph of $H_d(X/2)$, which is in the middle.

In sections \ref{other} and \ref{last} we  give further general families of log-polynomial sequences and it is seen that the Hermite-Jensen property  is a very general phenomenon. See also \cite{Far} where this `Hermite universality' is discussed for the Taylor coefficients of entire functions.

\section{Some  polynomial families} \label{polys}

The {\em De Moivre polynomials} $\dm_{n,k}(a_1, a_2, a_3, \dots)$ appear when taking powers of power series:
\begin{equation} \label{bell}
    \left( a_1 x +a_2 x^2+ a_3 x^3+ \cdots \right)^k = \sum_{n=k}^\infty \dm_{n,k}(a_1, a_2, a_3, \dots) x^n \qquad \quad (k \in \Z_{\gqs 0}).
\end{equation}
Expanding the left side of \e{bell} with multinomial coefficients then shows
\begin{equation} \label{bell2}
  \dm_{n,k}(a_1, a_2, a_3, \dots) = \sum_{\substack{1j_1+2 j_2+ \dots +mj_m= n \\ j_1+ j_2+ \dots +j_m= k}}
 \binom{k}{j_1 , j_2 ,  \dots , j_m} a_1^{j_1} a_2^{j_2}  \cdots a_m^{j_m}
\end{equation}
where $m=n-k+1$ and the sum   is over all possible $j_1$, $j_2$,  \dots , $j_m \in \Z_{\gqs 0}$.
Therefore $\dm_{n,k}(a_1, a_2, a_3, \dots)$ is a polynomial in $a_1, a_2, \dots, a_{n-k+1}$ of homogeneous degree $k$ with positive integer coefficients.   They were introduced by De Moivre in 1697 and are also known in the literature as a type of Bell polynomial. See \cite[Sect. 3.3]{Comtet}, \cite{odm} for more information. We will only need the following easily established properties:
\begin{align}\label{pw}
  \dm_{n,k}(c a_1, c^2 a_2, c^3 a_3, \dots) & = c^n \dm_{n,k}(a_1, a_2, a_3, \dots),\\
\label{expl}
  \dm_{n,k}(x,y,0,0,\dots) & = \binom{k}{n-k} x^{2k-n}y^{n-k} \qquad (n\gqs k).
\end{align}

If we set
\begin{equation} \label{hhh}
   h_r(y) :=
                                 \begin{cases}
                                   0, & \text{if $r$ is odd;} \\
                                   y^{r/2}/(r/2)!, & \text{if $r$ is even,}
                                 \end{cases}
\end{equation}
then it is clearly true that
\begin{equation} \label{genfu}
  e^{2X t+y t^2}= e^{2X t} \cdot e^{y t^2} = \sum_{d=0}^\infty \left[ d! \sum_{r=0}^d h_{d-r}(y) \frac{(2X)^r}{r!}\right] \frac{t^d}{d!}.
\end{equation}
With $y=-1$ in \e{genfu} we have the generating function of the {\em Hermite polynomials} $H_d(X)$:
\begin{equation}\label{her}
  H_d(X)= d! \sum_{r=0}^d h_{d-r}(-1) \frac{(2X)^r}{r!}.
\end{equation}
We will  need the $y=1$ case as well. It can  be related to $H_d(X)$ by replacing $X$ by $iX$ and $t$ by $t/i$ in \e{genfu}:
\begin{equation}\label{her2}
   d! \sum_{r=0}^d h_{d-r}(1) \frac{(2X)^r}{r!} = i^{-d} H_d(i X).
\end{equation}
Expanding the left of \e{genfu} we may also write
\begin{equation} \label{her3}
  d! \sum_{r=0}^d h_{d-r}(y) \frac{(2X)^r}{r!} = \sum_{k=0}^d \frac{d!}{k!} \dm_{d,k}(2X,y,0,0,\dots).
\end{equation}
Hence \e{expl} and \e{her3} imply the simple formulas \e{mv1} and \e{mv2}.

 The  {\em generalized  Laguerre polynomials} will also be required in section \ref{last}:
\begin{equation}\label{lagu}
  L_d^{(r)}(X):= \frac{\G(d+r+1)}{d!} \sum_{j=0}^d \binom{d}{j} \frac{(-X)^j}{\G(j+r+1)}.
\end{equation}
The Hermite polynomials may be expressed in terms of the Laguerre polynomials in \e{lagu} with $r=\pm 1/2$; see \cite[p. 284]{AAR}.

 Proposition \ref{logp} is proved next, giving some first properties of log-polynomial sequences. We need a lemma.

\begin{lemma} \label{delt}
Fix $m\gqs 0$. Suppose that $P_n(j) = o(\Phi(n))$ as $n \to \infty$ when $j=0,1,2,\dots,m$ for the polynomial
$$
P_n(x):= \phi_0(n)  +  \phi_1(n) x +  \phi_2(n) x^2 + \cdots  +\phi_m(n) x^m.
$$
 Then $\phi_k(n)= o(\Phi(n))$ for $k=0,1,2,\dots,m$.
\end{lemma}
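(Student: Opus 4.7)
The plan is to treat the $m+1$ hypotheses $P_n(j)=o(\Phi(n))$ as a linear system for the $m+1$ unknowns $\phi_0(n),\ldots,\phi_m(n)$, and to exploit the fact that evaluation of a polynomial of degree $\le m$ at the distinct nodes $0,1,\ldots,m$ is governed by the (invertible) Vandermonde matrix. Inversion then expresses each coefficient $\phi_k(n)$ as a fixed finite linear combination of the values $P_n(0),\ldots,P_n(m)$, and the little-$o$ smallness propagates.

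Concretely, I would first rewrite the defining identities
\[
\sum_{k=0}^m \phi_k(n)\, j^k \;=\; P_n(j) \qquad (j=0,1,\ldots,m)
\]
as the matrix equation $V\vec{\phi}(n)=\vec{P}(n)$, where $V=(j^k)_{0\le j,k\le m}$ and the vectors collect the unknown coefficients and the given evaluations respectively. Since the nodes $0,1,\ldots,m$ are pairwise distinct, $V$ is invertible with $\det V=\prod_{0\le i<j\le m}(j-i)\ne 0$, and the entries of $V^{-1}$ are rational numbers not depending on $n$. Solving,
\[
\phi_k(n) \;=\; \sum_{j=0}^m (V^{-1})_{kj}\, P_n(j) \qquad (k=0,1,\ldots,m),
\]
a finite linear combination of the $P_n(j)$ with $n$-independent coefficients.

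Since each $P_n(j)=o(\Phi(n))$ by hypothesis, and a finite linear combination of $o(\Phi(n))$ quantities with constant coefficients is again $o(\Phi(n))$, the conclusion $\phi_k(n)=o(\Phi(n))$ follows at once. There is no substantive obstacle here: the content is pure linear algebra, and the key point worth articulating is simply that the entries of $V^{-1}$ do not depend on $n$, so the fixed linear transformation preserves the $o(\Phi(n))$ estimate. An equally valid hands-on alternative is induction on $m$: the case $j=0$ gives $\phi_0(n)=P_n(0)=o(\Phi(n))$ immediately, and the polynomial $(P_n(j)-\phi_0(n))/j$ in $j$ has degree $m-1$, coefficients $\phi_1(n),\ldots,\phi_m(n)$, and still takes values $o(\Phi(n))$ at the $m$ distinct integer nodes $j=1,\ldots,m$, reducing to the case $m-1$.
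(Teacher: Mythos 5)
Your Vandermonde argument is correct: the evaluations of a degree-$\lqs m$ polynomial at the $m+1$ distinct nodes $0,1,\dots,m$ determine its coefficients through a fixed invertible matrix whose inverse has entries independent of $n$, so each $\phi_k(n)$ is an $n$-independent finite linear combination of the quantities $P_n(j)=o(\Phi(n))$ and is therefore itself $o(\Phi(n))$. This is a genuinely different route from the paper, which instead inducts on $m$: it applies the $m$-fold forward difference $\Delta^m$ at $x=0$, uses the identity $\left.\Delta^m x^k\right|_{x=0}=m!\stirb{k}{m}$ to see that $\sum_{j=0}^m\binom{m}{j}(-1)^{m+j}P_n(j)=m!\,\phi_m(n)$, concludes $\phi_m(n)=o(\Phi(n))$, absorbs the top term into the error, and descends to degree $m-1$. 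The two proofs are close in spirit --- the paper's difference operator is just the explicit bottom row of your $V^{-1}$ --- but your version dispenses with induction and with any explicit formula, at the cost of not exhibiting the coefficients; the paper's version produces the explicit combination \e{fres}, \e{fres2}, which it reuses in the proof of Lemma \ref{finn}, so its choice is partly one of economy. Your sketched inductive alternative also works, though note that after dividing by $j$ the remaining polynomial is evaluated at the nodes $1,\dots,m$ rather than $0,\dots,m-1$, so to invoke the lemma verbatim as an induction hypothesis you would either shift the variable or observe (as the Vandermonde argument makes clear) that any $m$ distinct nodes suffice.
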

\begin{proof}
Use induction on $m$ with the case $m=0$ clearly true. The forward difference operator $\Delta$ acts on functions by $\Delta f(x):= f(x+1)-f(x)$, and repeated application gives
\begin{equation} \label{fres}
  \Delta^r f(x) = \sum_{j=0}^r \binom{r}{j} (-1)^{r+j} f(x+j).
\end{equation}
Also, see \cite[Sect. 1.6]{Comtet},
\begin{equation} \label{fres2}
  \left.\Delta^r x^k \right|_{x=0} = \sum_{j=0}^r \binom{r}{j} (-1)^{r+j} j^k = r! \stirb{k}{r}
\end{equation}
where the Stirling subset numbers $\stirb{k}{r}$ count the number of ways to partition  $k$ elements into $r$ nonempty subsets with $\stirb{k}{k}=1$ and $\stirb{k}{r}=0$ for $r>k$.
Therefore
\begin{equation*}
  \sum_{j=0}^m \binom{m}{j} (-1)^{m+j} P_n(j)  = \left.\Delta^m P_n(x) \right|_{x=0}
    = \sum_{k=0}^m \phi_k(n) m! \stirb{k}{m}
 =  m! \phi_m(n).
\end{equation*}
It follows that $\phi_m(n)= o(\Phi(n))$. We can then include this highest degree term in the error and assume $P_n(x)$ is of degree $m-1$, allowing the induction to proceed.
\end{proof}

\begin{proof}[Proof of Proposition \ref{logp}] Parts (i) and (ii) are clear. For part (iii),
 assume that \e{hjk} and its associated conditions are true and also a starred version:
 there exist sequences $A^*(n)$, $\delta^*(n)$ and $g_k^*(n)$ for $k=3,4, \dots, m$  so that
\begin{equation} \label{hjkx}
    \log\left(\frac{\alpha(n+j)}{\alpha(n)}\right) = A^*(n) j +\kappa^* \cdot \delta^*(n)^2 j^2 + \sum_{k=3}^{m} g_k^*(n) j^k + o(\delta^*(n)^{m+1})
  \end{equation}
as $n \to \infty$, for $j=1,2, \dots, m+1$. Here  $\delta^*(n)>0$, $\delta^*(n) \to 0$ and $g_k^*(n)=o(\delta^*(n)^k)$ as $n \to \infty$.

 By Proposition \ref{first} we must have $\kappa^* = \kappa$. Then the difference of \e{hjk} and \e{hjkx} has
\begin{equation*}
  \left(A^*(n)-A(n)\right) j +\left(\kappa \cdot \delta^*(n)^2-\kappa \cdot \delta(n)^2\right) j^2 + \sum_{k=3}^{m} \left(g^*_k(n)-g_k(n)\right) j^k = o\left(\delta^*(n)^{m+1} + \delta(n)^{m+1}\right)
\end{equation*}
as $n \to \infty$, for $j=0,1, \dots, m+1$. Apply Lemma \ref{delt} to find
\begin{align}
  A^*(n)-A(n) & = o\left(\delta^*(n)^{m+1} + \delta(n)^{m+1}\right), \label{ut}\\
   \delta^*(n)^2- \delta(n)^2 & = o\left(\delta^*(n)^{m+1} + \delta(n)^{m+1}\right).\label{ut2}
\end{align}
We may simplify the error estimates in \e{ut} and \e{ut2} as follows.
Since $m\gqs 2$, \e{ut2} implies
\begin{equation*}
  \delta^*(n)^2-  \delta(n)^2  = o\left(\delta^*(n)^{3} + \delta(n)^{3}\right).
\end{equation*}
Hence
\begin{equation} \label{hen}
  \delta^*(n)-  \delta(n)  = o\left(\delta^*(n)^{2} + \delta(n)^{2}\right).
\end{equation}
Let $h(n):=\delta^*(n)/\delta(n)-1$. Then as $n \to \infty$, \e{hen} is equivalent to
\begin{align*}
  \frac{h(n)}{\delta^*(n)h(n) + \delta^*(n) +\delta(n)} \to 0 & \iff \frac{|\delta^*(n)h(n) + \delta^*(n) +\delta(n)|}{|h(n)|} \to \infty \\
& \iff \left|\delta^*(n) + \frac{\delta^*(n) +\delta(n)}{h(n)} \right|  \to \infty \\
& \iff  \frac{\delta^*(n) +\delta(n)}{|h(n)|}   \to \infty \\
& \iff  \frac{h(n)}{\delta^*(n) +\delta(n)}   \to 0.
\end{align*}
(We may omit the subsequence of $n$s where $h(n)=0$ in the above argument.)
It follows that $\delta^*(n) =\delta(n)(1+o(\delta^*(n) +\delta(n)))$ and, more simply, $\delta^*(n) =\delta(n)(1+o(1))$. Use this in \e{ut} and \e{ut2} to complete the proof of Proposition \ref{logp}
and we  have also shown that
\begin{equation}\label{not}
  g_k^*(n)  = g_k(n)+o\left(\delta(n)^{m+1}\right).
\end{equation}
\end{proof}

\section{Jensen limits} \label{lim}
The next result is a slight extension of \cite[Thm. 8]{GORZ} and we include its proof for completeness.

\begin{theorem} \label{pj}
Let $\alpha(n)$ be a sequence of positive real numbers and fix a positive integer $d$. Suppose there exist $R(n)$, $\delta(n)$ and $C_r(n)$ for $r=0,1, \dots, d$  so that, as $n \to \infty$,
\begin{equation} \label{sinn}
  \frac{\alpha(n+j)}{\alpha(n) R(n)^j} =  C_0(n) +\sum_{r=1}^d C_r(n) \delta(n)^r j^r +o(\delta(n)^d)
\end{equation}
 for $j=1,2, \dots, d$. Suppose also that  $\delta(n)>0$, $\delta(n) \to 0$ and $C_r(n) \to c_r$ as $n \to \infty$. Then as $n\to \infty$,
\begin{align} \label{jnxx}
   \frac{\delta(n)^{-d}}{\alpha(n)} J^{d,n}_\alpha \left( \frac{\delta(n) X -1}{R(n)}\right) & \to d! \sum_{k=0}^d (-1)^{d-k} c_{d-k} \frac{X^k}{k!},\\
 \frac{(R(n) \delta(n))^{-d}}{\alpha(n)} \ja^{d,n}_\alpha \Bigl( R(n)(\delta(n) X -1)\Bigr) & \to d! \sum_{k=0}^d  c_{d-k} \frac{X^k}{k!}. \label{jnxx2}
\end{align}
\end{theorem}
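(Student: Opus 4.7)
The plan is direct substitution into \eqref{jdef}, followed by an algebraic identity of generating-function type, and then passage to the limit. Starting with \eqref{jnxx}, I would write
$$
\frac{1}{\alpha(n)} J^{d,n}_\alpha\!\left(\frac{\delta(n)X-1}{R(n)}\right)
= \sum_{j=0}^d \binom{d}{j} \frac{\alpha(n+j)}{\alpha(n)R(n)^j}(\delta(n)X-1)^j
$$
and substitute the asymptotic \eqref{sinn} into each ratio. The $j=0$ case of the hypothesis is implicit: both sides equal $1$, forcing $C_0(n)=1+o(\delta(n)^d)$ and $c_0=1$, and the potentially divergent constant ``$j=0$'' contribution $\delta(n)^{-d}$ is cancelled by the leading piece of the binomial sum via $\sum_j\binom{d}{j}(-1)^j=0$ for $d\ge 1$. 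For fixed $X$ the factors $(\delta(n)X-1)^j$ are uniformly bounded, so the $o(\delta(n)^d)$ errors from \eqref{sinn} combine into a single $o(\delta(n)^d)$, which becomes $o(1)$ after multiplication by $\delta(n)^{-d}$.

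The main term reduces, after interchanging the order of summation, to
$$
\sum_{r=0}^d C_r(n)\delta(n)^r \sum_{j=0}^d \binom{d}{j} j^r (\delta(n)X-1)^j.
$$
The crucial step is the identity
$$
\sum_{j=0}^d \binom{d}{j} j^r u^j = \sum_{s=0}^r \stirb{r}{s}\frac{d!}{(d-s)!} u^s (1+u)^{d-s},
$$
obtained by applying the operator $(u\partial_u)^r = \sum_s \stirb{r}{s} u^s \partial_u^s$ to $(1+u)^d$. With $u=\delta(n)X-1$ so that $1+u=\delta(n)X$, each summand of the inner identity acquires an overall factor of $\delta(n)^{r-s} X^{d-s}$ once the global $\delta(n)^{-d}$ normalization is applied. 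Since $\delta(n)\to 0$ and $0\le s\le r$, only the diagonal $r=s$ terms survive in the limit, and together with $(\delta(n)X-1)^r\to(-1)^r$ and $C_r(n)\to c_r$ they give $\sum_{r=0}^d c_r(-1)^r\frac{d!}{(d-r)!}X^{d-r}$, which agrees with the right-hand side of \eqref{jnxx} after the reindexing $k=d-r$.

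The limit \eqref{jnxx2} follows by an almost identical argument applied to \eqref{kdef}. Substituting $Y=R(n)(\delta(n)X-1)$ and dividing by $(R(n)\delta(n))^d$, the $R(n)$ factors rearrange to reproduce the same ratios $\alpha(n+j)/(\alpha(n)R(n)^j)$, leading to the dual sum $\sum_{j=0}^d\binom{d}{j} j^r (\delta(n)X-1)^{d-j}$. The parallel identity $\sum_j \binom{d}{j} j^r u^{d-j}=\sum_s \stirb{r}{s}\frac{d!}{(d-s)!}(1+u)^{d-s}$, obtained by applying $(w\partial_w)^r$ to $(w+u)^d$ and setting $w=1$, now lacks the trailing factor of $u^s$, so after the same diagonal collapse the sign $(-1)^r$ is absent and the right-hand side of \eqref{jnxx2} emerges. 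The only delicate point I anticipate is making the $j=0$ bookkeeping and the propagation of the $o(\delta(n)^d)$ error precise; everything else reduces to routine manipulation of the Stirling-number identity.
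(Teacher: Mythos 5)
Your argument is correct, and it takes a genuinely different route from the paper's. The paper expands $(\delta(n)X-1)^j$ binomially at the outset, reads off the coefficient of $X^k$ as $\binom{d}{k}\sum_r C_r(n)\delta(n)^{k-d+r}\sigma(d,k;r)$ with $\sigma(d,k;r)=\sum_{j=k}^d(-1)^{j-k}\binom{d-k}{j-k}j^r$, and then invokes a separate combinatorial lemma (Lemma \ref{finn}) giving the exact vanishing $\sigma(d,k;r)=0$ for $r<d-k$ and the evaluation $\sigma(d,k;d-k)=(-1)^{d-k}(d-k)!$; the exact vanishing is what kills the negative powers of $\delta(n)$, while the terms with $r>d-k$ die only in the limit. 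You instead keep $u=\delta(n)X-1$ intact and use the Stirling identity $\sum_j\binom{d}{j}j^ru^j=\sum_s\stirb{r}{s}\frac{d!}{(d-s)!}u^s(1+u)^{d-s}$, so that $(1+u)^{d-s}=(\delta(n)X)^{d-s}$ supplies exactly the power of $\delta(n)$ needed to neutralize the global $\delta(n)^{-d}$: every term carries $\delta(n)^{r-s}$ with $r\gqs s$, nothing divergent remains to be cancelled, and only the diagonal $r=s$ survives, with $\stirb{r}{r}=1$ and $(\delta(n)X-1)^r\to(-1)^r$ producing the Hermite-type limit. The two identities are of course cousins (iterated forward differences versus powers of the Euler operator $u\partial_u$, linked by the Stirling numbers that already appear in the paper's Lemma \ref{delt}), but your packaging trades the paper's vanishing lemma for the binomial theorem, which is arguably cleaner; your treatment of \e{jnxx2} via $(w\partial_w)^r(w+u)^d$ at $w=1$ is likewise sound. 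Two points to tighten. First, convergence in \e{jnxx} is coefficient-wise convergence of polynomials, so the boundedness you invoke should be of the coefficients of $(\delta(n)X-1)^j$ rather than of its values at a fixed $X$ (harmless, since these coefficients are bounded). Second, the $j=0$ term lies outside the stated hypothesis \e{sinn}, which is only assumed for $j=1,\dots,d$, so your step of feeding $j=0$ into the expansion really does require the additional assumption $C_0(n)=1+o(\delta(n)^d)$; the paper makes the same silent assumption (its inner sum over $j$ starts at $j=k=0$ and it remarks only afterwards that necessarily $c_0=1$), and the assumption holds in all the applications, but it should be stated rather than described as ``implicit.''
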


 We necessarily have %$C_0(n)=1$ so that
 $c_0=1$ and the polynomials on the right of \e{jnxx} and \e{jnxx2} are of degree $d$ and monic.
The proof of Theorem \ref{pj} relies on the following key combinatorial lemma.

\begin{lemma} \label{finn}
For integers $d,k,r$ with $d\gqs k$ and $r\gqs 0$, set
\begin{equation*}
  \sigma(d,k;r):=  \sum_{j=k}^d (-1)^{j-k} \binom{d-k}{j-k} j^r.
\end{equation*}
Then $\sigma(d,k;r)=0$ if $r<d-k$ and $\sigma(d,k;d-k)=(-1)^{d-k}(d-k)!$.
\end{lemma}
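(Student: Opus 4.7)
The plan is to reinterpret $\sigma(d,k;r)$ as a forward difference of a polynomial and then invoke the identities \eqref{fres} and \eqref{fres2} already in the excerpt. Substituting $i := j-k$ and writing $m := d-k$, the sum becomes
$$
  \sigma(d,k;r) = \sum_{i=0}^{m} (-1)^{i} \binom{m}{i} (i+k)^r.
$$
Setting $f(x) := (x+k)^r$ and reading \eqref{fres} at $x=0$ with the difference depth equal to $m$, the right-hand side is $(-1)^m \Delta^m f(0)$, since $(-1)^{m+i} = (-1)^m(-1)^i$.

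Next I expand the binomial $(x+k)^r = \sum_{\ell=0}^r \binom{r}{\ell} k^{r-\ell} x^\ell$ and apply $\Delta^m$ termwise at $x=0$. Using \eqref{fres2} this yields
$$
  \sigma(d,k;r) = (-1)^m m! \sum_{\ell=0}^r \binom{r}{\ell} k^{r-\ell} \stirb{\ell}{m}.
$$
Since $\stirb{\ell}{m}=0$ for $\ell<m$ and $\stirb{m}{m}=1$, every term vanishes when $r<m=d-k$, giving $\sigma(d,k;r)=0$. When $r=m=d-k$, only the $\ell=m$ term survives and contributes $(-1)^m m! = (-1)^{d-k}(d-k)!$, as claimed.

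There is no genuine obstacle: the argument is really just the classical fact that the $m$-fold forward difference annihilates any polynomial of degree strictly less than $m$ and returns $m!$ times the leading coefficient on a monic polynomial of degree $m$. The only care needed is in the bookkeeping of the sign $(-1)^m$ coming from the index shift $i=j-k$, and in matching the notation of \eqref{fres}--\eqref{fres2} (whose dummy variable $r$ is distinct from the $r$ in the statement of the lemma).
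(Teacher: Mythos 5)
Your argument is correct and is essentially the paper's own second proof: both recognize $\sigma(d,k;r)$ as $(-1)^{d-k}$ times an $(d-k)$-fold forward difference of a degree-$r$ polynomial and conclude via \eqref{fres} and \eqref{fres2}; you merely shift the evaluation point to $0$ (expanding $(x+k)^r$ and using the vanishing of $\genfrac{\{}{\}}{0pt}{}{\ell}{m}$ for $\ell<m$) where the paper evaluates $\Delta^{d-k}x^r$ directly at $x=k$ and appeals to degree reduction. The paper also records an alternative generating-function proof via $e^{kz}(1-e^z)^{d-k}$, but your route is a perfectly sound instance of the difference-operator argument.
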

\begin{proof}
Employing generating functions %as in  \cite[Lemma 4.1]{Ono}
we have
\begin{align*}
  \sum_{r=0}^\infty \sigma(d,k;r) \frac{z^r}{r!} & = \sum_{j=k}^d (-1)^{j-k} \binom{d-k}{j-k} e^{jz} \\
  & = e^{k z}\sum_{j=k}^d  \binom{d-k}{j-k} (-e^z)^{j-k} = e^{k z}(1-e^z)^{d-k}
\end{align*}
and the result follows as this last expression is $$
\left(1+k z+O(z^2)\right)\left(1-1-z+O(z^2)\right)^{d-k}
=(-1)^{d-k}z^{d-k}+ O(z^{d-k+1}).
$$

For a second proof, as in \cite[Sect. 2]{GORZ}, $\sigma(d,k;r)$ may be recognized with \e{fres} as an iterated forward difference of the monomial $x^r$ with
\begin{equation} \label{pop}
  \sum_{j=k}^d (-1)^{j-k} \binom{d-k}{j-k} j^r  = \left. (-1)^{d-k} \Delta^{d-k} x^r \right|_{x=k}.
\end{equation}
Since every application of $\Delta$ reduces the degree by $1$, \e{pop} must equal $0$ for $r<d-k $. For   $r=d-k$ the degree of $ \Delta^{r} x^r$ is zero and \e{pop} equals $(-1)^{r}r!$ by \e{fres2}.
\end{proof}

\begin{proof}[Proof of Theorem \ref{pj}]
Write
\begin{align*}
  \frac{\delta(n)^{-d}}{\alpha(n)} J^{d,n}_\alpha \left( \frac{\delta(n) X -1}{R(n)}\right)
& = \frac{\delta(n)^{-d}}{\alpha(n)} \sum_{j=0}^d \binom{d}{j} \alpha(n+j) \left( \frac{\delta(n) X -1}{R(n)}\right)^j \\
 & =\sum_{k=0}^d \binom{d}{k} \delta(n)^{k-d} X^k \sum_{j=k}^d (-1)^{j-k}\binom{d-k}{j-k} \frac{\alpha(n+j)}{\alpha(n) R(n)^j}.
\end{align*}
Insert \e{sinn} and interchange the order of summation to find that the coefficient of $X^k$ equals
\begin{equation} \label{pinn}
  \binom{d}{k} \sum_{r=0}^d C_r(n) \delta(n)^{k-d+r} \sum_{j=k}^d (-1)^{j-k} \binom{d-k}{j-k} j^r + o\left(\delta(n)^k\right).
\end{equation}
By Lemma \ref{finn}, \e{pinn} is
\begin{equation} \label{rahm}
  \frac{d!}{k!} (-1)^{d-k} C_{d-k}(n) + \binom{d}{k} \sum_{r=d-k+1}^d C_r(n) \delta(n)^{k-d+r} \sigma(d,k;r) + o\left(\delta(n)^k\right)
\end{equation}
and \e{jnxx} follows. The limit \e{jnxx2} is shown similarly with $(-1)^{d-k}\sigma(d-k,0;r)$ appearing in place of $\sigma(d,k;r)$.
\end{proof}

%To prove an approximate version of this last theorem we need another  lemma.

\begin{lemma} \label{finnc}
Fix $j\in \R$ and suppose $\delta(n) \to 0$ as $n \to \infty$ and $\rho(n)=o(\delta(n))$. Then for every positive integer $m$  we can write
\begin{equation*}
  (1+\rho(n))^j = 1 + \sum_{\ell=1}^{m-1} \rho_\ell(n) j^\ell + o\left(\delta(n)^m \right) \qquad \text{with} \qquad \rho_\ell(n)=o\left(\delta(n)^\ell \right).
\end{equation*}
\end{lemma}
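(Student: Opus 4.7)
The plan is to rewrite $(1+\rho(n))^j$ as $\exp\bigl(j\log(1+\rho(n))\bigr)$ and apply Taylor's theorem to the exponential, then verify the stated size estimates from $\rho(n)=o(\delta(n))$. Since $\rho(n)\to 0$, the quantity $1+\rho(n)$ is eventually positive so the logarithm is well-defined. Taylor's theorem with Lagrange remainder gives
\[
e^{jy} \;=\; \sum_{\ell=0}^{m-1} \frac{y^\ell}{\ell!}\, j^\ell \;+\; \frac{y^m j^m}{m!}\, e^{\theta j y}
\]
for some $\theta=\theta(n)\in (0,1)$. Substituting $y=\log(1+\rho(n))$ suggests the definition $\rho_\ell(n) := \bigl(\log(1+\rho(n))\bigr)^\ell/\ell!$ for $\ell \gqs 1$, with the $\ell=0$ term giving the leading $1$.

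The remaining work consists of two short asymptotic checks. For the coefficients, the standard expansion $\log(1+x) = x + O(x^2)$ as $x\to 0$ combined with $\rho(n)=o(\delta(n))$ gives
\[
\log(1+\rho(n)) = \rho(n)(1+o(1)) = o(\delta(n)),
\]
hence $\rho_\ell(n) = o(\delta(n)^\ell)$ as required. For the remainder, since $j$ is fixed and $\log(1+\rho(n))\to 0$, the factor $e^{\theta j \log(1+\rho(n))}$ is bounded, so the remainder is
\[
O\bigl(|\log(1+\rho(n))|^m\bigr) = O(\rho(n)^m) = o(\delta(n)^m).
\]
There is no serious obstacle here; the proof is essentially an exercise in Taylor's theorem once the exponential rewriting is in place. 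The only mild point requiring care is that the bound on $e^{\theta j \log(1+\rho(n))}$ uses that $j$ is fixed rather than growing with $n$, so that this factor stays uniformly bounded for $n$ sufficiently large.
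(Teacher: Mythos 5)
Your proposal is correct and follows essentially the same route as the paper: both write $(1+\rho(n))^j=e^{j\log(1+\rho(n))}$, take $\rho_\ell(n)=\log^\ell(1+\rho(n))/\ell!$, and bound the remainder using $\log(1+\rho(n))=O(|\rho(n)|)=o(\delta(n))$. The only cosmetic difference is that you invoke Taylor's theorem with Lagrange remainder and the real expansion $\log(1+x)=x(1+o(1))$, while the paper uses the elementary inequality $|\log(1+z)|\lqs 2|z|$ together with the big-$O$ form of the exponential's Taylor remainder.
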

\begin{proof}
We have the useful elementary inequality
\begin{equation}\label{uin}
  |\log(1+z)| \lqs 2|z| \qquad \text{for} \qquad z\in \C, \ |z|\lqs 3/4.
\end{equation}
Therefore $j \log(1+\rho(n))$ is small for large $n$ and so
\begin{equation*}
  (1+\rho(n))^j = e^{j \log(1+\rho(n))} = 1 + \sum_{\ell=1}^{m-1} \frac 1{\ell!}\log^\ell(1+\rho(n)) \cdot j^\ell + O\bigl( |\log(1+\rho(n))|^m j^m\bigr).
\end{equation*}
The lemma follows as $\log(1+\rho(n)) = O(|\rho(n)|) = o(\delta(n))$ by \e{uin}.
\end{proof}

Lemma \ref{finnc} is used next to show there is some flexibility in how we choose $R(n)$ and $\delta(n)$ in \e{sinn}.

\begin{prop} \label{pjxx}
Let $\alpha(n)$ be a sequence of positive real numbers and fix a positive integer $d$. Suppose there exist $R(n)$, $\delta(n)$ and $C_r(n)$ for $r=0,1, \dots, d$ satisfying the conditions of Theorem \ref{pj}. This means that, as $n \to \infty$,
\begin{equation} \label{sinnxx}
  \frac{\alpha(n+j)}{\alpha(n) R(n)^j} = \sum_{r=0}^d C_r(n) \delta(n)^r j^r +o(\delta(n)^d)
\end{equation}
 for $j=1,2, \dots, d$. Also  $\delta(n)>0$, $\delta(n) \to 0$ and $C_r(n) \to c_r$ as $n \to \infty$. If $R^*(n)$ and $\delta^*(n)$ satisfy
\begin{equation} \label{ophaxx}
   R^*(n) = R(n)(1+o(\delta(n))), \qquad  \delta^*(n) = \delta(n)(1+o(1)) \qquad \text{as \quad $n\to \infty$}
\end{equation}
then there exist $C^*_r(n)$ so that \e{sinnxx}  remains true with $R(n)$, $C_r(n)$ and $\delta(n)$ replaced by  their starred versions. Precisely,  as $n \to \infty$,
\begin{equation} \label{sinnxx2}
  \frac{\alpha(n+j)}{\alpha(n) R^*(n)^j} = \sum_{r=0}^d C^*_r(n) \delta^*(n)^r j^r +o(\delta^*(n)^d)
\end{equation}
 for $j=1,2, \dots, d$, where $C^*_r(n) \to c_r$.
\end{prop}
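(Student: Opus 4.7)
The strategy is to factor out the rescaling. Writing $R^*(n) = R(n)(1+\rho(n))$, the first condition in \e{ophaxx} gives $\rho(n) = o(\delta(n))$, hence
\begin{equation*}
  \frac{\alpha(n+j)}{\alpha(n) R^*(n)^j} = \frac{\alpha(n+j)}{\alpha(n) R(n)^j} \cdot (1+\rho(n))^{-j}.
\end{equation*}
The first factor is controlled by the hypothesis \e{sinnxx}. For the second, I would invoke Lemma \ref{finnc} (applied with $-j$ in place of $j$ and $m = d+1$) to write
\begin{equation*}
  (1+\rho(n))^{-j} = 1 + \sum_{\ell=1}^{d} \widetilde\rho_\ell(n) j^\ell + o(\delta(n)^{d+1}), \qquad \widetilde\rho_\ell(n) = o(\delta(n)^\ell).
\end{equation*}

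The next step is to multiply the two expansions and collect powers of $j$. For each $s \in \{0,1,\dots,2d\}$ this produces a coefficient $B_s(n)$ at $j^s$. A short computation shows that for $0 \leq s \leq d$,
\begin{equation*}
B_s(n) = C_s(n) \delta(n)^s + \sum_{\ell=1}^s C_{s-\ell}(n)\delta(n)^{s-\ell}\widetilde\rho_\ell(n) = C_s(n) \delta(n)^s + o(\delta(n)^s),
\end{equation*}
while for $d < s \leq 2d$ every summand is $o(\delta(n)^s) = o(\delta(n)^d)$. Since $j$ is bounded by $d$, the high-degree part $\sum_{s>d} B_s(n) j^s$, together with the cross terms between the errors and the polynomial parts, can all be folded into a single $o(\delta(n)^d)$ remainder.

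Finally I would set $C^*_s(n) := B_s(n)/\delta^*(n)^s$ for $0 \leq s \leq d$, so that \e{sinnxx2} holds by construction. The hypothesis $\delta^*(n) = \delta(n)(1+o(1))$ implies $\delta(n)^s/\delta^*(n)^s \to 1$ for each fixed $s$, yielding $C^*_s(n) \to c_s$, and the same identity converts the $o(\delta(n)^d)$ error into $o(\delta^*(n)^d)$. The main obstacle, though a minor one, is the routine bookkeeping needed to confirm that every cross term really does fit into the advertised remainder; this is painless because $(1+\rho(n))^{-j}$ is $O(1)$ uniformly in $j \in \{1,\dots,d\}$ as $n \to \infty$.
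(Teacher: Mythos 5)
Your proposal is correct and follows essentially the same route as the paper: factor out $(1+\rho(n))^{-j}$, expand it via Lemma \ref{finnc}, multiply the two polynomial expansions in $j$, absorb the degree-$>d$ terms and cross errors into $o(\delta(n)^d)$, and rescale by $\delta^*(n)^s$ to define $C^*_s(n)$ (your $B_s(n)/\delta^*(n)^s$ is exactly the paper's formula for $C^*_r(n)$ written in unexpanded form). No gaps.
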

\begin{proof}
For clarity, write $R^*(n) = R(n)(1+\rho(n))$  and $\delta^*(n) = \delta(n)(1+\phi(n))$ where $\rho(n)=o(\delta(n))$ and $\phi(n)=o(1)$. Then \e{sinnxx} implies
\begin{equation} \label{jul}
  \frac{\alpha(n+j)}{\alpha(n) R^*(n)^j} = (1+\rho(n))^{-j}\left(\sum_{r=0}^d C_r(n) (1+\phi(n))^{-r} \delta^*(n)^r j^r +o(\delta^*(n)^d)\right).
\end{equation}
With Lemma \ref{finnc}, write
$$
(1+\rho(n))^{-j} = \sum_{\ell=0}^{d} \rho_\ell(n) j^\ell + o\left(\delta^*(n)^{d+1} \right)
\qquad \text{for} \qquad \rho_\ell(n) := \frac {(-1)^\ell}{\ell!}\log^\ell(1+\rho(n))
$$
and $\rho_\ell(n) = o(\delta^*(n)^\ell)$. Hence
\begin{multline*}
  \frac{\alpha(n+j)}{\alpha(n) R^*(n)^j} = \sum_{u=0}^d \delta^*(n)^u j^u \sum_{\ell =0}^u C_{u-\ell}(n) (1+\phi(n))^{\ell-u} \frac{\rho_\ell(n)}{\delta^*(n)^\ell} \\
  + \sum_{u=d+1}^{2d} \delta^*(n)^u j^u \sum_{\ell =u-d}^d C_{u-\ell}(n) (1+\phi(n))^{\ell-u} \frac{\rho_\ell(n)}{\delta^*(n)^\ell} + o(\delta^*(n)^d).
\end{multline*}
The sum with $u$ between $d+1$ and $2d$ is $o(\delta^*(n)^d)$. Consequently we obtain \e{sinnxx2} for
\begin{equation*}
  C^*_r(n) := C_{r}(n) (1+\phi(n))^{-r} + \sum_{\ell =1}^r C_{r-\ell}(n) (1+\phi(n))^{\ell-r} \frac{\rho_\ell(n)}{\delta^*(n)^\ell}.
\end{equation*}
The bounds for $\phi(n)$ and $\rho_\ell(n)$ now ensure that $C^*_r(n) \to c_r$ as $n \to \infty$.
\end{proof}

\section{Main theorems} \label{main}

%The next slight extension of \cite[Thm. 3]{GORZ} allows the sign of $\delta(n)^2 j^2$ to be $\pm 1$.

\begin{proof}[Proof of Theorem \ref{vog}]
We have
\begin{equation} \label{ac3}
    \log\left(\frac{\alpha(n+j)}{\alpha(n)}\right) = A(n) j +\kappa \cdot \delta(n)^2 j^2 + \sum_{k=3}^{m} g_k(n) j^k + o\left(\delta(n)^{m+1}\right)
  \end{equation}
 for $j=1,2, \dots, m+1$. Also  $\delta(n)>0$, $\delta(n) \to 0$ and $g_k(n)=o(\delta(n)^k)$ as $n \to \infty$.
Let $Y(n,j):=\kappa \cdot  \delta(n)^2 j^2 + \sum_{k=3}^{m} g_k(n) j^k$. Then $Y(n,j)\ll \delta(n)^2$ and
\e{ac3} implies
\begin{align}
   \frac{\alpha(n+j)}{\alpha(n) \exp(A(n))^j} & = \left(\sum_{u=0}^t \frac 1{u!} Y(n,j)^u +O\left(|Y(n,j)|^{t+1} \right) \right)\left(1+ o\left(\delta(n)^{m+1}\right)\right) \notag\\
& = \sum_{u=0}^t \frac 1{u!} Y(n,j)^u + O\left(\delta(n)^{2t+2}\right)+o\left(\delta(n)^{m+1}\right). \label{hail}
\end{align}
Now, recalling \e{bell} and \e{pw},
\begin{align*}
  \sum_{u=0}^{t} \frac 1{u!} Y(n,j)^u &
= \sum_{u=0}^{t} \frac 1{u!} \left(\kappa \cdot  \delta(n)^2 j^2 + \sum_{k=3}^{m} g_k(n) j^k\right)^u \\
   & = \sum_{u=0}^{t} \frac 1{u!} \sum_{r=u}^{m u} \dm_{r,u}\left(0,\kappa \cdot \delta(n)^2, g_3(n), \dots, g_{m}(n),0,\dots \right) j^r\\
& = \sum_{u=0}^{t} \frac 1{u!} \sum_{r=u}^{m u} \dm_{r,u}\left(0,\kappa, \frac{g_3(n)}{\delta(n)^3}, \dots, \frac{g_{m}(n)}{\delta(n)^{m}},0,\dots \right) \delta(n)^r j^r\\
& = \sum_{r=0}^{m t} C_r(n) \delta(n)^r j^r
\end{align*}
for
$$
C_r(n) = \sum_{u=0}^{t} \frac 1{u!} \dm_{r,u}\left(0,\kappa, \frac{g_3(n)}{\delta(n)^3}, \dots, \frac{g_{m}(n)}{\delta(n)^{m}},0,\dots \right).
$$
To make the error in \e{hail} of size $o(\delta(n)^d)$ we choose $d$ with $1\lqs d\lqs m+1$ and then $t\gqs 1$ so that $2t\gqs d$.
Our assumptions for $g_k(n)$ mean that $C_r(n)\ll 1$ and so
\begin{equation}\label{alm}
\frac{\alpha(n+j)}{\alpha(n) \exp(A(n))^j} = C_0(n)+\sum_{r=1}^{d} C_r(n) \delta(n)^r j^r + o\left(\delta(n)^d\right).
\end{equation}
Also
$$
\lim_{n \to \infty} C_r(n) = \sum_{u=0}^{t} \frac 1{u!} \dm_{r,u}\left(0,\kappa, 0,0, \dots, \right),
$$
where $\dm_{r,u}\left(0,\kappa, 0,0, \dots, \right)$ equals $\kappa^u$ if $r=2u$ and  otherwise equals $0$.
Therefore, for $r\lqs d \lqs 2t$,
\begin{equation} \label{crnt}
  \lim_{n\to \infty}  C_r(n) = c_r =
                                 \begin{cases}
                                   0, & \text{if $r$ is odd;} \\
                                   \kappa^{r/2}/(r/2)!, & \text{if $r$ is even.}
                                 \end{cases}
\end{equation}
Theorem \ref{pj} now applies to \e{alm} with $R(n)=\exp(A(n))$. The  polynomials on the right of \e{jnxx} and \e{jnxx2} are recognized by \e{her} and \e{her2} as the desired ones on the right of \e{hjak}. This finishes the proof of Theorem \ref{vog}.
\end{proof}

\begin{proof}[Proof of Theorem \ref{vog2}]
Using the proof of Theorem \ref{vog} we obtain \e{alm} satisfying the conditions of Theorem \ref{pj} with $R(n)=\exp(A(n))$. For $A^*(n)$ and $\delta^*(n)$ satisfying \e{tre}, set $R^*(n)=\exp(A^*(n))$. Then $R^*(n) = R(n)(1+o(\delta(n)))$ and   Proposition \ref{pjxx} implies that \e{alm} is also valid with $A(n)$, $\delta(n)$ and $C_r(n)$ replaced by $A^*(n)$, $\delta^*(n)$ and $C^*_r(n)$, with $C^*_r(n)$ giving the same limit \e{crnt}. Applying Theorem \ref{pj} now  completes the proof.
\end{proof}

The next result lets us prove the properties mentioned in Theorem \ref{redct} for the zeros of shifted Jensen polynomials associated to Hermite-Jensen sequences.

\begin{theorem} \label{dct2} \cite{US}
 Let $Q_n(X)$ be a sequence of monic polynomials of the same degree in $\C[x]$ so that
\begin{equation} \label{qlim}
  \lim_{n\to \infty} Q_n(X) = Q(X).
\end{equation}
Let $r$ be any root  of $Q(X)$ and suppose it has  multiplicity $m$. Then for every $\epsilon>0$ we can find an $N$ so that $Q_n(X)$ has $m$ roots  inside the complex ball of radius $\epsilon$ centered at $r$  for all $n \gqs N$.
\end{theorem}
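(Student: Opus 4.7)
The plan is to deploy Rouché's theorem, which is the standard device for tracking zeros under polynomial perturbations. Two simple facts will drive the argument: (i) since $Q_n$ and $Q$ are monic of the same degree $d$, the coefficient-wise convergence in \e{qlim} is equivalent to uniform convergence on every compact subset of $\C$, because $Q_n(X)-Q(X)$ has degree at most $d-1$ and each monomial is bounded on compact sets; and (ii) $|Q(X)|$ is bounded away from $0$ on any circle around $r$ whose closed interior intersects $\{Q=0\}$ only at $r$.

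First I would reduce to a small neighborhood of $r$. Pick $\epsilon'>0$ smaller than both $\epsilon$ and the distance from $r$ to the other (finitely many) roots of $Q$, so that $r$ is the unique zero of $Q$ in the closed disk $\overline{D}:=\{X\in\C:|X-r|\lqs \epsilon'\}$. Set $\mu:=\min_{|X-r|=\epsilon'}|Q(X)|$, which is strictly positive by compactness of the circle and the absence of zeros of $Q$ on it.

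Next, using (i), for all sufficiently large $n$ we have $|Q_n(X)-Q(X)|<\mu\lqs |Q(X)|$ on the entire circle $|X-r|=\epsilon'$. Rouché's theorem then shows that $Q_n=Q+(Q_n-Q)$ has the same number of zeros, counted with multiplicity, inside the open disk $|X-r|<\epsilon'$ as $Q$ does, namely $m$. Since $\epsilon'<\epsilon$, these $m$ zeros lie in the ball of radius $\epsilon$ around $r$, which is the conclusion of the theorem.

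I do not anticipate any serious obstacle: the only point requiring care is the initial choice of $\epsilon'$, which must respect both $\epsilon$ and the spacing of the roots of $Q$ so as to isolate $r$ inside a circle on which $Q$ is bounded below. Once that is arranged, the monic-plus-same-degree hypothesis upgrades coefficient convergence to uniform convergence on compacta, and Rouché's theorem delivers the multiplicity count for free.
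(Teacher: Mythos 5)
Your argument is correct and complete. The paper itself gives no proof of this theorem---it is quoted from the reference [US]---so there is nothing internal to compare against; but the Rouch\'e argument you give is the standard proof of this continuity-of-roots statement, and every step checks out: the monic-same-degree hypothesis makes $Q_n-Q$ of degree at most $d-1$ with coefficients tending to $0$, hence $Q_n\to Q$ uniformly on the circle $|X-r|=\epsilon'$; the circle is chosen so that $Q$ has no zeros on it and $r$ is its only zero inside; and Rouch\'e then transfers the multiplicity count $m$ to $Q_n$ for $n$ large. The only point worth making explicit is the degenerate case where $r$ is the sole root of $Q$ (e.g.\ $Q(X)=(X-r)^d$), in which the ``distance to the other roots'' is vacuous and one simply takes $\epsilon'<\epsilon$; your parenthetical already covers this.
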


\begin{proof}[Proof of Theorem \ref{redct}]
Part (i) follows from Proposition \ref{first}. For part (ii), it is clearly enough to prove it for $J_\alpha^{d,n}(X)$. Theorem \ref{vog} implies the sequence is Hermite-Jensen of degree $d$. Hence there exist sequences of positive reals $A_n, B_n, C_n$ so that
\begin{equation} \label{1990}
 A_n J_\alpha^{d,n}(B_n X - C_n) \to Q(X):= \begin{cases}
H_d(X/2) & \text{ if $\kappa=-1$,} \\
i^{-d} H_d(i X/2) & \text{ if $\kappa=1$.}
\end{cases}
\end{equation}
 The coefficient of $X^d$ on the left of \e{1990} is $\alpha(n+d)A_n B_n^d$ which is nonzero and tends to $1$. Divide by this to get the  polynomials
\begin{equation*}
  Q_n(X):=  J_\alpha^{d,n}(B_n X - C_n)/(\alpha(n+d) B_n^d).
\end{equation*}
Then $Q_n(X)$ is monic of degree $d$ with real coefficients and tends to $Q(X)$ as $n \to \infty$.

The Hermite polynomials $H_d(X)$ have distinct real roots with $0$ as a root exactly when $d$ is odd.
Choose $\epsilon$ in Theorem \ref{dct2} small enough so that the balls of this radius around each root of $Q(x)$ don't overlap. It follows  that there exists an $N$ so that the roots of $Q_n(X)$ are distinct for $n \gqs N$. Hence $J_\alpha^{d,n}(X)$ must also have distinct roots for $n$ large enough.

If $\kappa=-1$ then the roots of $Q(X)$ are real. The roots of $Q_n(X)$ for $n \gqs N$ must also be real as otherwise two conjugate roots would lie within the same ball. Therefore the roots of $J_\alpha^{d,n}(X)$ must also be real in this case.
If $\kappa=1$ then the roots of $Q(X)$ are all imaginary (and not real) except for the root $0$ when $d$ is odd. Choose a smaller $\epsilon$ if necessary so that the balls around the non-real roots do not intersect the real line. If $d$ is odd then the single root of $Q_n(X)$ in the ball about $0$ must be real for $n \gqs N$. All other roots of $Q_n(X)$  must  be not real. Therefore the roots of $J_\alpha^{d,n}(X)$ must also be non-real in this case with the exception of one real root when $d$ is odd.
\end{proof}

\section{Partitions} \label{p-proof}

The general properties of asymptotic expansions such as
$$
f(x)=a_0+\frac{a_1}{x}+\frac{a_2}{x^2}+\frac{a_3}{x^3}+O\left( \frac1{x^4}\right) \qquad \text{when} \quad x \to \infty,
$$
are explained in \cite[pp. 16 - 22]{Ol}. These expansions are unique to each order and may be added, multiplied and divided. It is also valid to integrate the expansion of $f(x)$ to  obtain the expansion of the integral of $f(x)$. However, differentiating asymptotic expansions is not always valid in the same way. It may be justified in the following situation, as described in \cite[p. 21]{Ol}, by employing integration and uniqueness. We give the details here for the case we require.

\begin{lemma} \label{diff}
Let $f(x)$ be a differentiable function on some positive interval $[c,\infty)$  with $f'(x)$  continuous. Suppose $f$ and $f'$ have the asymptotic expansions
\begin{equation*}
  f(x)  = \sum_{j=1}^R \frac{a_j}{x^{j/2}} + O\left( \frac{1}{x^{(R+1)/2}}\right),\qquad
f'(x)  = \sum_{j=0}^S \frac{b_j}{x^{j/2}} + O\left( \frac{1}{x^{(S+1)/2}}\right)
\end{equation*}
as $x \to \infty$.
Then  $b_0=b_1=b_2=0$ and $b_{j+2}=-(j/2)a_j$ for $3\lqs j \lqs \min(R,S-2)$. In other words, we may differentiate the asymptotic expansion of $f(x)$ to get the asymptotic expansion of $f'(x)$ if $f'(x)$ is continuous and  known to have an asymptotic expansion.
\end{lemma}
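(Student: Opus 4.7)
My plan is to combine the fundamental theorem of calculus with the uniqueness of asymptotic expansions in half-integer powers of $x$. The key observation is that the given expansion of $f(x)$ starts at $j=1$, so $f(x)\to 0$ as $x\to \infty$; this decay will force $b_0,b_1,b_2$ to vanish and will also pin down all remaining coefficients.

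First I would establish $b_0=b_1=b_2=0$ by examining $f(x)-f(c)=\int_c^x f'(t)\,dt$, whose left-hand side is $O(1)$. Substituting the expansion of $f'$ on the right, if $b_0\ne 0$ then the integral grows like $b_0\, x$; if $b_0=0$ but $b_1\ne 0$ it grows like $2b_1\sqrt{x}$; and if $b_0=b_1=0$ but $b_2\ne 0$ it grows like $b_2\log x$. Each of these contradicts boundedness of $f(x)-f(c)$. The logarithmic case is the subtlest: it amounts to the fact that an asymptotic expansion in half-integer powers of $x$ with no $\log x$ term cannot absorb a contribution of size $b_2\log x$.

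Once $b_0=b_1=b_2=0$, the expansion of $f'(t)$ shows $f'(t)=O(t^{-3/2})$, so $f'$ is absolutely integrable on $[c,\infty)$. Since $f(x)\to 0$, we have $f(x)=-\int_x^\infty f'(t)\,dt$. I would then integrate the expansion of $f'(t)$ termwise from $x$ to $\infty$; this is legitimate because the remainder $O(t^{-(S+1)/2})$ integrates to $O(x^{-(S-1)/2})$. The result is
\begin{equation*}
  f(x)=-\sum_{j=3}^S \frac{2b_j}{(j-2)\,x^{(j-2)/2}} + O\left(\frac{1}{x^{(S-1)/2}}\right).
\end{equation*}
Reindexing by $k=j-2$ and comparing with the hypothesized expansion $f(x)=\sum_{j=1}^R a_j/x^{j/2}+O(x^{-(R+1)/2})$, the uniqueness of asymptotic expansions in powers of $x^{-1/2}$ forces $a_k=-2b_{k+2}/k$, i.e.\ $b_{k+2}=-(k/2)a_k$, for all $k$ in the common range $1\lqs k\lqs \min(R,S-2)$, which in particular covers the asserted range $3\lqs j\lqs \min(R,S-2)$.

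The main obstacle is really just the bookkeeping in the logarithmic case, and making sure the integrated error term $O(x^{-(S-1)/2})$ is small enough to allow reading off each $b_{k+2}$ from $a_k$. Beyond that, the argument needs no ingredients beyond termwise integration of the expansion and uniqueness of asymptotic series.
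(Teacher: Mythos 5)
Your proposal is correct and follows essentially the same route as the paper: use $f(x)=f(c)+\int_c^x f'(t)\,dt$ and boundedness of $f$ to force $b_0=b_1=b_2=0$, then integrate the expansion of $f'$ from $x$ to $\infty$ termwise (with the integrable remainder) and match coefficients against the given expansion of $f$ by uniqueness of asymptotic expansions in powers of $x^{-1/2}$. Your treatment of the $\log x$ obstruction for $b_2$ and the explicit justification that $f(x)=-\int_x^\infty f'(t)\,dt$ are slightly more detailed than the paper's, but the argument is the same.
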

\begin{proof}
The identity
$$
f(x)=f(c)+\int_c^x f'(t)\, dt
$$
shows that we must have $b_0=b_1=b_2=0$ as otherwise $f(x)$ becomes unbounded as $x\to \infty$.
Set $$
E_S(x):= f'(x)  - \sum_{j=3}^S \frac{b_j}{x^{j/2}}.
$$
Then
$$
  f(x)  = \int_\infty^x f'(t)\, dt
     = \int_\infty^x \sum_{j=3}^S \frac{b_j}{t^{j/2}}\, dt + \int_\infty^x E_S(t) \, dt
  = \sum_{j=1}^{S-2} \frac{c_j}{x^{j/2}} + O\left( \frac{1}{x^{(S-1)/2}}\right)
$$
for $b_{j+2}=-(j/2)c_j$. Hence, for $M=\min(R,S-2)$,
\begin{equation}\label{tmin}
0=f(x)-f(x) = \sum_{j=1}^{M} \frac{a_j-c_j}{x^{j/2}} + O\left( \frac{1}{x^{(T+1)/2}}\right).
\end{equation}
Multiply both sides of \e{tmin} by $x^{1/2}$ and take the limit as $x\to \infty$ to see that $a_1=c_1$. Repeat this procedure to show that $a_j=c_j$ for $j\lqs M$. This completes the proof.
\end{proof}

We will also use the next  result following from basic complex variables; see \cite[pp. 125-126]{Al}.
\begin{lemma} \label{tay}
Suppose $F(z)$ is holomorphic for $\Re(z)>T_0$ and for $T>2T_0$ we have the bound
\begin{equation*}
  |F(T(1+\lambda))| \lqs \beta(T) \qquad \text{for} \qquad \lambda \in \C, \ |\lambda|\lqs 1/2.
\end{equation*}
Then for all $T>2T_0$ and $|\lambda|\lqs 1/4$, say, we have
\begin{equation*}
  F(T(1+\lambda)) = \sum_{r=0}^{m} \frac{T^r}{r!} F^{(r)}(T) \cdot \lambda^r +O\bigl(\beta(T) \cdot |\lambda|^{m+1}\bigr),
\end{equation*}
where the implied constant depends only on $m$.
\end{lemma}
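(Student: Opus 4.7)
The plan is to reduce Lemma \ref{tay} to a one-variable Cauchy estimate by freezing $T$ and viewing the expansion as being in the single complex variable $\lambda$. First, I would set $G(\lambda):=F(T(1+\lambda))$. For $|\lambda|\lqs 1/2$ the hypothesis $T>2T_0$ gives $\Re(T(1+\lambda))\gqs T/2 > T_0$, so $G$ is holomorphic on a neighbourhood of the closed disc $|\lambda|\lqs 1/2$, and the bound $|F(T(1+\lambda))|\lqs \beta(T)$ becomes $|G(\lambda)|\lqs \beta(T)$ there. The chain rule applied iteratively yields $G^{(r)}(0)=T^r F^{(r)}(T)$, so the right-hand side of the statement is the degree-$m$ Taylor polynomial of $G$ at $0$ plus an error term that needs to be controlled.

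Next I would produce the remainder in integral form using Cauchy's formula on the circle $|\zeta|=1/2$. Summing the geometric identity $\frac{1}{\zeta-\lambda}=\sum_{r=0}^{m}\frac{\lambda^r}{\zeta^{r+1}}+\frac{\lambda^{m+1}}{\zeta^{m+1}(\zeta-\lambda)}$ against $G(\zeta)/(2\pi i)$ and invoking the Cauchy integral formulas for $G(\lambda)$ and $G^{(r)}(0)/r!$, one obtains
$$
G(\lambda)-\sum_{r=0}^{m}\frac{G^{(r)}(0)}{r!}\lambda^r \;=\; \frac{\lambda^{m+1}}{2\pi i}\oint_{|\zeta|=1/2} \frac{G(\zeta)}{\zeta^{m+1}(\zeta-\lambda)}\,d\zeta.
$$
This identity is valid for every $\lambda$ strictly inside the contour, in particular for $|\lambda|\lqs 1/4$.

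Finally, I would bound the remainder by a trivial ML-estimate. For $|\zeta|=1/2$ and $|\lambda|\lqs 1/4$ the triangle inequality gives $|\zeta-\lambda|\gqs 1/4$, while $|\zeta|^{m+1}=2^{-(m+1)}$ and $|G(\zeta)|\lqs \beta(T)$. Since the contour has length $\pi$, the integral is bounded by $2^{m+2}\beta(T)$, and multiplying by $|\lambda|^{m+1}/(2\pi)\cdot 2\pi$ yields a remainder of size at most $2^{m+2}\beta(T)|\lambda|^{m+1}$. This constant depends only on $m$, which is exactly the claim.

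I do not anticipate any genuine obstacle: the argument is a textbook Cauchy-remainder computation. The only things to get right are checking that the relevant discs stay inside $\Re(z)>T_0$ (which uses $T>2T_0$) and keeping $|\lambda|\lqs 1/4$ bounded away from the contour $|\zeta|=1/2$ so that $|\zeta-\lambda|$ is bounded below by a constant independent of $T$.
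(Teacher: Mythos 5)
Your argument is correct and is exactly the standard Cauchy-remainder proof that the paper defers to its reference (\cite[pp.~125--126]{Al}): freeze $T$, apply Taylor's theorem with the integral form of the remainder to $G(\lambda)=F(T(1+\lambda))$ on the disc $|\zeta|\lqs 1/2$, and use the ML-estimate with $|\zeta-\lambda|\gqs 1/4$. The chain-rule identity $G^{(r)}(0)=T^rF^{(r)}(T)$ and the check that $T>2T_0$ keeps the disc in the half-plane of holomorphy are the only points of contact with the specific setup, and you handle both.
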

%\begin{proof}

%\end{proof}

%\begin{equation*}
%  A(n):=\frac{s^2-3s+3}{2(n-1/24)(s-1)}, \qquad \delta(n):= \frac{\left(s^3-6s^2+12s-6 %\right)^{1/2}}{2\sqrt{2}(n-1/24)(s-1)}.
%\end{equation*}

The method we use next to prove  that the sequence $p(n)$ is Hermite-Jensen will also work for  overpartitions, $k$-regular partitions and many other examples.

\begin{proof}[Proof of Theorem \ref{partit}]
Put $\g:=2\pi^2/3$ and $n':=n-1/24$ for convenience, and also set
$$
T:= \g n', \qquad \lambda:=j/n', \qquad F(z):=-\log z+\log\left( 1-\frac 1{\sqrt{z}}\right)+\sqrt{z}.
$$
Then
\e{hard2} implies
\begin{equation*}
  \log(p(n+j))=\log\left(\frac{\g}{4\sqrt{3}}\right) + F(T(1+\lambda))+O(e^{- \sqrt{\g n}/2}).
\end{equation*}
Let $m\gqs 2$ be a fixed integer. By Lemma \ref{tay} we have the expansion
$$
 F(T(1+\lambda)) = \sum_{r=0}^{m} \frac{T^r}{r!} F^{(r)}(T) \cdot \lambda^r +O\left( n^{1/2}|\lambda|^{m+1}\right)
$$
and hence, for $j$ in the range we want, $1\lqs j \lqs m+1$,
\begin{equation} \label{wimb}
  \log\left(\frac{p(n+j)}{p(n)}\right) = \sum_{r=1}^{m} \frac{\g^{r}}{r!} F^{(r)}(T) \cdot j^r +O\left( \frac{1}{n^{m+1/2}}\right)
\end{equation}
as $n \to \infty$ for an implied constant depending only on $m$. A comparison of \e{wimb} with \e{hjk} now gives explicit expressions for $A(n)$, $\kappa \cdot\delta(n)^2$ and $g_k(n)$ in terms of derivatives of $F(z)$.

We wish to apply Lemma \ref{diff} to these derivatives to obtain their asymptotic expansions. To do this write
\begin{equation} \label{toca}
  F'(z)=\frac{1}{2 z^{1/2}}-\frac{1}{z}+\frac{1}{2 z^{3/2}} G(\sqrt{z}) \qquad \text{for} \qquad G(z):=\frac{1}{1-1/z}.
\end{equation}
Then $G(z)$ has the nice properties $G'(z)= - G(z)^2/z^2$ and
\begin{equation} \label{gex}
  G(z)=\sum_{j=0}^m \frac{1}{z^j}+O\left( \frac 1{|z|^{m+1}}\right) \qquad (m \in \Z_{\gqs 0}, |z|>1).
\end{equation}
Therefore, when $x >1$ and working inductively, $F^{(r)}(x)$ may be expressed as a linear combination of terms of the form $G(\sqrt{x})^a/x^{b/2}$ where $a,b \in \Z_{\gqs 0}$. It then follows from \e{gex} that $F^{(r)}(x)$
has an asymptotic expansion  of the form
\begin{equation*}
  F^{(r)}(x) = \sum_{j=0}^S \frac{c_j}{x^{j/2}} + O\left( \frac{1}{x^{(S+1)/2}}\right),
\end{equation*}
for any $S$, as $x \to \infty$. The first case, by \e{toca}, has
\begin{equation} \label{toca2}
  F'(x)=\frac{1}{2 x^{1/2}}-\frac{1}{x^{2/2}}+\frac{1}{2 x^{3/2}} +\frac{1}{2 x^{4/2}}+ \cdots +\frac{1}{2 x^{S/2}}+ O\left( \frac{1}{x^{(S+1)/2}}\right).
\end{equation}
It is also clear that $F^{(r)}(x)$ is continuous
  for $x >1$.  Lemma \ref{diff} now tells us that successive derivatives of both sides of \e{toca2} agree.  From \e{wimb} we have
\begin{align}
  A(n) & = \frac{\g}{1!} F'(T) = \frac{\g}{2\sqrt{T}}+O\left( \frac 1{T}\right), \label{sf}\\
  -\delta(n)^2 & = \frac{\g^{2}}{2!} F''(T) = -\frac{\g^{2}}{8 T^{3/2}}+O\left( \frac 1{T^2}\right),\label{sf2}\\
g_k(n) & = \frac{\g^{k}}{k!} F^{(k)}(T) = O\left( \frac 1{n^{k-1/2}}\right) \qquad (k\gqs 3). \label{sf3}
\end{align}
Then in terms of $n$,
\begin{gather}
  A(n) = \frac{\sqrt{\g}}{2\sqrt{n}}\left(1-\frac{1}{24n}\right)^{-1/2}+O\left( \frac 1{n}\right)
=  \frac{\sqrt{\g}}{2\sqrt{n}}+O\left( \frac 1{n}\right), \label{sf4}
\\
 \delta(n) %=\left( \frac{\g^{2}}{8 T^{3/2}}+O\left( \frac 1{T^2}\right)\right)^{1/2}
=\frac{\g^{1/4}}{8^{1/2}n^{3/4}}\left(1-\frac{1}{24n}\right)^{-3/4}\left(1+O\left( \frac 1{n^{1/2}}\right)\right)^{1/2}
=\frac{\g^{1/4}}{8^{1/2}n^{3/4}}\left(1+O\left( \frac 1{n^{1/2}}\right)\right). \label{sf5}
\end{gather}
With \e{wimb}, \e{sf} -- \e{sf5} we have shown that the sequence $p(n)$ is log-polynomial of
degree $m$ with data $\{A(n)$, $-1$, $\delta(n)\}$ where $A(n)$ and $\delta(n)$ are given exactly in \e{sf}, \e{sf2}, simplifying to \e{part-ad}. Hence, by Theorem \ref{vog}, $p(n)$ is also Hermite-Jensen for all positive integer degrees and the same data. Take $A^*(n)$, $\delta^*(n)$ to be the main terms on the right of \e{sf4}, \e{sf5}. They satisfy the conditions of Theorem \ref{vog2} and so $p(n)$ is also Hermite-Jensen for the simpler data $\{A^*(n)$, $-1$, $\delta^*(n)\}$ in \e{part-ads}, as we wanted to show.
\end{proof}

An alternative approach supplies a more explicit version of \e{wimb}. Write
\begin{equation*}
  F(T(1+\lambda))-F(T)=-\log(1+\lambda)+\sqrt{T}\left(\sqrt{1+\lambda}- 1\right)+\log\left(
1-\frac{1}{\sqrt{T}-1}\left[(1+\lambda)^{-1/2}-1\right]
\right)
\end{equation*}
and expanding this as a series in $\lambda$ using \e{bell} produces
\begin{equation*}
  F(T(1+\lambda))-F(T)= \sum_{r=1}^\infty \rho_r \left(\sqrt{T} \right) \lambda^r
\end{equation*}
for
\begin{equation}\label{beet}
\rho_r(x) :=  \frac{(-1)^r}{r} + \binom{1/2}{r} x -\sum_{u=1}^r \frac{1}{u (x-1)^u} \dm_{r,u}\left(\binom{-1/2}{1}, \binom{-1/2}{2}, \dots \right).
\end{equation}
In particular,
\begin{equation}\label{rh}
  \rho_1(x) = \frac{1}{2}\left(\frac{x^2}{x-1}-3\right), \qquad \rho_2(x) = \frac{1}{8}\left(\frac{-x^3}{(x-1)^2}+6\right).
\end{equation}
We have proved
\begin{prop} \label{kcc}
For $j=1,2,\dots,m+1$ we have
$$
\log\left( \frac{p(n+j)}{p(n)}\right) = \sum_{r=1}^{m} \frac{\rho_r( \sqrt{\g n'})}{(n')^r} j^r +O\left( \frac{1}{n^{m+1/2}}\right)
$$
as $n\to \infty$ with an  implied constant depending only on $m$.
\end{prop}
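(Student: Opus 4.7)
The plan is to start from the asymptotic \e{hard2} and the setup from the proof of Theorem \ref{partit}: with $T = \g n'$, $\lambda = j/n'$ and $F(z) = -\log z + \log(1 - 1/\sqrt{z}) + \sqrt{z}$, one has $\log p(n+j) = \log(\g/(4\sqrt 3)) + F(T(1+\lambda)) + O(e^{-\sqrt{\g n}/2})$. Subtracting the $j=0$ case reduces the statement to producing an explicit Taylor-type expansion of $F(T(1+\lambda)) - F(T)$ in $\lambda$ around $0$, with a tail bound of size $O(1/n^{m+1/2})$.

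First I would split $F(T(1+\lambda)) - F(T)$ into the three pieces coming from the three summands of $F$. The piece $-\log(1+\lambda)$ expands via the standard series $-\log(1+\lambda) = \sum_{r\gqs 1}(-1)^r \lambda^r/r$, contributing $(-1)^r/r$ to the coefficient of $\lambda^r$. The piece $\sqrt{T}(\sqrt{1+\lambda} - 1)$ expands via the binomial series $\sum_{r\gqs 1}\binom{1/2}{r}\lambda^r$, contributing $\binom{1/2}{r}\sqrt T$. For the logarithm piece I would first expand the inner function as $(1+\lambda)^{-1/2} - 1 = \sum_{u\gqs 1} \binom{-1/2}{u}\lambda^u$, then apply $\log(1-w) = -\sum_{u\gqs 1} w^u/u$ with $w = [(1+\lambda)^{-1/2} - 1]/(\sqrt T - 1)$. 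The $u$th power of a power series is precisely where the De Moivre polynomials \e{bell} intervene, producing the coefficient $-\sum_{u=1}^{r} \dm_{r,u}(\binom{-1/2}{1}, \binom{-1/2}{2}, \dots)/(u(\sqrt T - 1)^u)$ of $\lambda^r$. Adding the three contributions reproduces exactly $\rho_r(\sqrt T)$ as defined in \e{beet}.

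Next I would control the error from truncating the three series at order $r = m$. Since $1 \lqs j \lqs m+1$ and $n' = n - 1/24$, we have $|\lambda| = j/n' \ll 1/n$, so for $n$ large all three series converge and their tails past order $m+1$ are bounded by a universal constant times $|\lambda|^{m+1}$ multiplied by the $r=m+1$ coefficient. The coefficients $\rho_r(\sqrt T)$ themselves satisfy $\rho_r(\sqrt T) = O(\sqrt T) = O(\sqrt n)$, since the dominant piece is the linear $\binom{1/2}{r}\sqrt T$ contribution and the sum over $u$ involving $(\sqrt T - 1)^{-u}$ is $O(1)$. Combining these, the truncation error is at most $O(\sqrt n \cdot |\lambda|^{m+1}) = O(n^{1/2}/n^{m+1}) = O(1/n^{m+1/2})$, which also absorbs the exponentially small error from \e{hard2}.

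The main technical obstacle is the third piece: one has to compose two power series (the binomial series inside a logarithm series) and identify the resulting coefficient cleanly as a De Moivre polynomial, rather than as a messy multi-index sum. The formula \e{bell2} together with the definition \e{bell} makes this identification essentially a bookkeeping exercise, and the rest of the proof is then routine estimation.
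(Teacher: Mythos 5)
Your proposal is correct and follows essentially the same route as the paper: the same three-way decomposition of $F(T(1+\lambda))-F(T)$, the same series expansions with the De Moivre polynomials \e{bell} handling the composed logarithm piece to recover \e{beet}, and the same error bound $O(\sqrt{n}\,|\lambda|^{m+1})=O(n^{-m-1/2})$ (which the paper obtains by carrying over \e{wimb} via Lemma \ref{tay}, whereas you argue the tail bound directly; either way the estimate is the one needed).
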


\section{Overpartitions and other sequences} \label{other}

To help get an understanding of the kinds of sequences that are log-polynomial,  consider
\begin{equation}\label{alex}
  \alpha(n) = n^a \exp(c  \cdot n^b) \qquad (a,b,c \in \R)
\end{equation}
with $a,c$ not both $0$. Then for $1\lqs j \lqs n/2$, say,
$$
 \log\left(\frac{\alpha(n+j)}{\alpha(n)}\right) = \sum_{k=1}^\infty \left( (-1)^{k+1} \frac{a}{k} +c \cdot  n^b \binom{b}{k} \right) \frac{j^k}{n^k}.
$$
From the $k=1$ and $k=2$ coefficients
\begin{equation} \label{load}
  A(n) = -\frac{a}{n}+b c \cdot n^{b-1}, \qquad \kappa \cdot \delta(n)^2 = -\frac{a}{2n^2}+\frac{b(b-1)c}{2n^{2-b}}.
\end{equation}
The condition $b<2$ is needed to ensure that $\delta(n) \to 0$. If $b(b-1)c=0$ or $b<0$ then $\alpha(n)$ is not log-polynomial since $g_k(n) \neq o(\delta(n)^k)$. The log-polynomial conditions are met in the remaining cases and we obtain the following:

\begin{prop} \label{lar}
Let $\alpha(n)$ be given by \e{alex}. Suppose $0<b<2$, $b\neq 1$ and $c\neq 0$. Set $\kappa = \sgn(b(b-1)c)$ and define $A(n)$ and $\delta(n)$ with \e{load}.  Then, for all degrees $m\gqs 2$, this sequence is log-polynomial with data $\{A(n)$, $\kappa$, $\delta(n)\}$.
\end{prop}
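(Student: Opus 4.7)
The plan is to expand $\log(\alpha(n+j)/\alpha(n))$ as a power series in $j/n$ and match the result term-by-term against Definition \ref{dbf}. Starting from
\begin{equation*}
\log\left(\frac{\alpha(n+j)}{\alpha(n)}\right) = a\log(1+j/n) + cn^b\bigl[(1+j/n)^b - 1\bigr]
\end{equation*}
and applying the Taylor series for $\log(1+x)$ and $(1+x)^b$, which converge for $|x| < 1$ and hence apply for $x = j/n$ with $1 \lqs j \lqs m+1$ and $n$ large, produces the convergent expansion already displayed in the preamble. I read off $A(n)$ and $\kappa \delta(n)^2$ from the $k=1$ and $k=2$ coefficients as in \eqref{load}, set $g_k(n)$ equal to the $k$th coefficient for $3 \lqs k \lqs m$, and designate the tail $\sum_{k > m}$ as the error term in \eqref{hjk}.

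The remaining work is to check the decay conditions required by the definition. Since $b(b-1)c \neq 0$ by hypothesis, the leading term $b(b-1)c/(2n^{2-b})$ of $\kappa\delta(n)^2$ is a nonzero multiple of $n^{b-2}$, so the choice $\kappa = \sgn(b(b-1)c)$ makes $\delta(n)^2 > 0$ for all sufficiently large $n$; and $b < 2$ then forces $\delta(n) \to 0$. For $g_k(n) = o(\delta(n)^k)$ with $3 \lqs k \lqs m$, the dominant contribution to $g_k(n)$ is $c\binom{b}{k} n^{b-k}$ (the $a$-term decays faster, like $n^{-k}$, and $\binom{b}{k} \neq 0$ since our hypotheses force $b \notin \Z_{\gqs 0}$), so $g_k(n) \asymp n^{b-k}$. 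Comparing with $\delta(n)^k \asymp n^{k(b-2)/2}$, the required strict inequality $b-k < k(b-2)/2$ reduces to $k > 2$, which holds. For the tail error, standard remainder bounds for the two Taylor series give $\sum_{k > m} |C_k(n)|\cdot (m+1)^k = O(n^{b-m-2}) + O(n^{-m-2})$, and the same kind of exponent comparison shows each is $o(\delta(n)^{m+1})$ when $b < 2$ and $m \gqs 2$.

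The main obstacle is not a single hard step but the careful bookkeeping that shows the listed exclusions really do break the log-polynomial property, so the hypotheses in the proposition are sharp. If $b(b-1)c = 0$, the leading $n^{b-2}$ coefficient of $\delta(n)^2$ collapses and $\delta(n)$ becomes of order $1/n$, against which $g_k(n) = \Theta(1/n^k)$ is no longer $o(\delta(n)^k)$; if $b \gqs 2$ then $\delta(n)$ fails to tend to zero; and if $b < 0$ with $c \neq 0$, the $n^{b-2}$ contribution to $\delta(n)^2$ becomes subdominant to the $a$-contribution, producing the same failure mode as $b(b-1)c = 0$. The positive statement of Proposition \ref{lar} is precisely the complement of these obstructions.
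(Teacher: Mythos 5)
Your proof is correct and takes essentially the same route as the paper: the paper displays the identical power-series expansion of $\log(\alpha(n+j)/\alpha(n))$ in $j/n$, reads off $A(n)$ and $\kappa\cdot\delta(n)^2$ from the $k=1,2$ coefficients as in \e{load}, and simply asserts that ``the log-polynomial conditions are met in the remaining cases,'' which is exactly the verification you carry out. The only quibble is a harmless off-by-one in your tail estimate: the leading omitted term is $k=m+1$, so the tail is $O(n^{b-m-1})+O(n^{-m-1})$ rather than $O(n^{b-m-2})+O(n^{-m-2})$, but since $b-m-1<(m+1)(b-2)/2$ whenever $b>0$ and $m\gqs 2$, the required bound $o(\delta(n)^{m+1})$ still holds and your conclusion is unaffected.
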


%If a sequence $\alpha(n)$ satisfies the assumptions of Proposition \ref{lar} then so does $1/\alpha(n)$ and $\kappa$ switches sign.
Then $\alpha(n)$ satisfying Proposition \ref{lar} is Hermite-Jensen and it follows from Theorem \ref{redct} that if $b(b-1)c<0$ then the zeros of $J_\alpha^{d,n}(X)$ are all distinct and real for $n$ sufficiently large. At the `boundary' case $b=1$ we see that
$$
J_\alpha^{d,n}(X) = n^a e^{cn}\left((1+e^c X)^d +O\left( \frac 1n \right) \right)
$$
and the roots are all close to $-1/e^{c}$ for large $n$.

For another example, the sequence $\g(n)$ coming from the Taylor coefficients of the Riemann zeta function at $s=1/2$ is studied in \cite{GORZ}. In their Theorem 1 they show that the zeros of $J_\g^{d,n}(X)$ are all real for sufficiently large $n$. By \cite[Thm. 1.4]{OS21}, the simpler sequence $\alpha(n):=n^{-n}$ gives a crude approximation to $\g(n)$.  Use the method of Theorem \ref{partit} with $F(z)=-z \log z$ to show that this $\alpha(n)$ is log-polynomial with data $\{ -1-\log n, -1, (2n)^{-1/2}\}$.

Lastly in this section we consider the sequence of overpartitions $\overline p(n)$ from \e{pgf}.

\begin{theorem} \label{opartit}
With $S:=\pi  \sqrt{n}$, set
\begin{equation*}
  A(n):=\frac{\pi^2}{2} \left(\frac{1}{S-1} - \frac{3}{S^2} \right), \qquad \delta(n):= \frac{\pi^2}{2} \left(\frac{1}{2S(S-1)^2} - \frac{3}{S^4} \right)^{1/2}.
\end{equation*}
Then for every  degree $m \gqs 2$ the overpartition sequence $\ov(n)$ is log-polynomial with data $\{A(n)$, $-1$, $\delta(n)\}$. Hence $\ov(n)$ is also Hermite-Jensen for all positive integer degrees and the same data or, more simply, $\{A^*(n)$, $-1$, $\delta^*(n)\}$ for
\begin{equation*} %\label{opart-ads}
   A^*(n):=\frac{\pi}{2 n^{1/2}}, \qquad  \delta^*(n) := \frac{\sqrt{\pi}}{8^{1/2} n^{3/4}}.
\end{equation*}
\end{theorem}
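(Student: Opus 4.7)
The plan is to mirror the proof of Theorem~\ref{partit} with the very same auxiliary function
\[
F(z):=-\log z+\log\left(1-\frac{1}{\sqrt{z}}\right)+\sqrt{z},
\]
but taking $T:=\pi^2 n$ in place of $\g n'$. From \e{ohard2} one has $\log\ov(n)=\log(\pi^2/8)+F(T)+O(e^{-2\pi\sqrt{n}/3})$, and since $\pi^2(n+j)=T(1+\lambda)$ with $\lambda:=j/n$,
\[
\log\left(\frac{\ov(n+j)}{\ov(n)}\right)=F(T(1+\lambda))-F(T)+O(e^{-2\pi\sqrt{n}/3}).
\]

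Next, for fixed $m\gqs 2$ and $1\lqs j\lqs m+1$, I would apply Lemma~\ref{tay} exactly as in the proof of Theorem~\ref{partit} to expand the right-hand side in $\lambda$ through order $m$ with error $O(n^{-m-1/2})$. Matching this against \e{hjk} identifies
\[
A(n)=\pi^2 F'(T),\qquad -\delta(n)^2=\frac{\pi^4}{2}F''(T),\qquad g_k(n)=\frac{\pi^{2k}}{k!}F^{(k)}(T)\quad(k\gqs 3).
\]
Because $F$ is literally the function whose derivatives were estimated in the proof of Theorem~\ref{partit}, the asymptotic expansions obtained there through Lemma~\ref{diff} carry over verbatim: $F^{(k)}(T)=O(T^{1/2-k})$, hence $g_k(n)=O(n^{1/2-k})$, while $A(n)\sim \pi/(2\sqrt{n})$ and $\delta(n)\sim \sqrt{\pi}/(8^{1/2}n^{3/4})$. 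A comparison of exponents gives $g_k(n)=o(\delta(n)^k)$ for $k\gqs 3$, so the log-polynomial conditions of Definition~\ref{dbf} are met with $\kappa=-1$.

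Substituting $S=\sqrt{T}=\pi\sqrt{n}$ into $\pi^2 F'(T)$ and $-(\pi^4/2)F''(T)$ and collecting terms over the common denominator $S^4(S-1)^2$ then yields the closed forms for $A(n)$ and $\delta(n)$ stated in the theorem. Theorem~\ref{vog} grants the Hermite-Jensen property for $1\lqs d\lqs m+1$, and since $m$ is arbitrary this holds for all positive integer degrees. Finally, the main terms $A^*(n)=\pi/(2\sqrt{n})$ and $\delta^*(n)=\sqrt{\pi}/(8^{1/2}n^{3/4})$ satisfy $A^*(n)=A(n)+o(\delta(n))$ and $\delta^*(n)=\delta(n)(1+o(1))$, so Theorem~\ref{vog2} allows us to use this simpler data. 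The only real obstacle is routine algebraic bookkeeping: verifying that $\pi^2 F'(T)$ and $\sqrt{-(\pi^4/2)F''(T)}$ collapse to the compact expressions given in the statement, which is straightforward since we already know the first two derivatives of $F$ in closed form from the partition calculation.
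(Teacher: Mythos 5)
Your proposal is correct and is essentially identical to the paper's proof, which simply states that the argument for Theorem \ref{partit} carries over verbatim with $\g$ replaced by $\pi^2$ and $n'$ by $n$ (so $T=\pi^2 n$, $S=\sqrt{T}=\pi\sqrt{n}$, and the same $F$). Your identification of $A(n)=\pi^2F'(T)$ and $-\delta(n)^2=(\pi^4/2)F''(T)$, the bound $g_k(n)=O(n^{1/2-k})=o(\delta(n)^k)$ for $k\gqs 3$, and the appeal to Theorems \ref{vog} and \ref{vog2} for the simplified data all match the paper's intended argument.
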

\begin{proof}
The proof is the same as for Theorem \ref{partit} but with $\g=\pi^2$ and $n'=n$.
\end{proof}

We can also write a similar expansion to Proposition \ref{kcc}, recalling \e{beet}:

\begin{prop} \label{kcco}
For $j=1,2,\dots,m+1$ we have
$$
\log\left( \frac{\ov(n+j)}{\ov(n)}\right) = \sum_{r=1}^{m-1} \frac{\rho_r(\pi \sqrt{n})}{n^r} j^r +O\left( \frac{1}{n^{m+1/2}}\right)
$$
as $n\to \infty$ with an  implied constant depending only on $m$.
\end{prop}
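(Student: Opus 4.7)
The proof will follow essentially the same template as Proposition \ref{kcc}, which was just established. The crucial observation is that the asymptotic expansion \e{ohard2} for $\ov(n)$ has exactly the same analytic shape as \e{hard2} for $p(n)$: setting $T := \pi^2 n$ so that $\sqrt T = \pi\sqrt n$, the formula \e{ohard2} rearranges as
\begin{equation*}
  \log \ov(n) = \log(\pi^2/8) + F(T) + O\bigl(e^{-2\pi\sqrt n/3}\bigr),
\end{equation*}
where $F(z) := -\log z + \log(1 - 1/\sqrt z) + \sqrt z$ is \emph{precisely} the same function that controlled $p(n)$ in the proof of Theorem \ref{partit}.

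Next, set $\lambda := j/n$ so that $T(1+\lambda) = \pi^2(n+j)$. Replacing $n$ by $n+j$ in the display above and subtracting produces
\begin{equation*}
  \log\left(\frac{\ov(n+j)}{\ov(n)}\right) = F(T(1+\lambda)) - F(T) + O\bigl(e^{-2\pi\sqrt n/3}\bigr),
\end{equation*}
the exponential error being negligible compared with any negative power of $n$. At this point I would invoke the explicit series identity
\begin{equation*}
  F(T(1+\lambda)) - F(T) = \sum_{r=1}^\infty \rho_r(\sqrt T)\, \lambda^r
\end{equation*}
that was derived (via \e{bell}) just above Proposition \ref{kcc}, with $\rho_r$ given by \e{beet}.

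Finally, truncating this series via Lemma \ref{tay} applied to $F$, which is holomorphic for $\Re z > 1$ and satisfies $F(T(1+\lambda)) = O(\sqrt T)$ uniformly on $|\lambda|\lqs 1/2$, controls the tail by $O(\sqrt T\, |\lambda|^m)$. Substituting $\sqrt T = \pi\sqrt n$ and $\lambda = j/n$ converts the partial sum into the claimed series in powers of $j/n$ with coefficients $\rho_r(\pi\sqrt n)$, while for $1\lqs j\lqs m+1$ the tail error absorbs into $O(1/n^{m+1/2})$. There is essentially no obstacle once the identical role of $F$ is noticed: every step is a verbatim adaptation of the proof of Proposition \ref{kcc}, the only bookkeeping task being the substitution of constants $(\pi^2, n)$ in place of $(\g, n')$.
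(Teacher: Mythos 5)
Your proposal is exactly the paper's route: the paper proves Theorem \ref{opartit} by remarking that the argument for $p(n)$ goes through verbatim with $\g=\pi^2$ and $n'=n$, and Proposition \ref{kcco} is obtained from the same explicit expansion of $F(T(1+\lambda))-F(T)$ via \e{bell} and \e{beet} that produced Proposition \ref{kcc}. One bookkeeping point: a tail of $O(\sqrt{T}\,|\lambda|^{m})$ is only $O(n^{1/2-m})=O(1/n^{m-1/2})$, which does not absorb into the claimed $O(1/n^{m+1/2})$; to reach that error you must keep the expansion through $r=m$ so the tail is $O(\sqrt{T}\,|\lambda|^{m+1})$, exactly as in Proposition \ref{kcc} (whose sum runs to $r=m$ — the upper limit $m-1$ printed in Proposition \ref{kcco} appears to be a typo, since the $r=m$ term $\rho_m(\pi\sqrt{n})/n^m \asymp n^{1/2-m}$ is too large to be hidden in the stated error).
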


\section{$k$-regular partitions} \label{kregpa}

We first review from \cite[Sect. 4.12]{AAR} some results we will require for the Bessel functions $I_\alpha(z)$, defined in \e{ibes}.
For $\alpha$, $z \in \C$ they satisfy
\begin{equation}\label{ib}
  I_{\alpha-1}(z)-I_{\alpha+1}(z) = 2\alpha I_{\alpha}(z)/z, \qquad 2I'_{\alpha}(z)= I_{\alpha-1}(z)+I_{\alpha+1}(z),
\end{equation}
from which we obtain
\begin{equation}\label{ib2}
  I'_{0}(z)=I_1(z), \qquad I'_{1}(z)= I_{0}(z)-I_{1}(z)/z.
\end{equation}
As $|z|\to \infty$ with $|\arg z|<\pi/2$ there is the asymptotic expansion
\begin{equation}\label{ibexp}
  I_{\alpha}(z)= \frac{e^z}{\sqrt{2\pi z}}\left(
\sum_{j=0}^{m-1} \binom{j-1/2+\alpha}{j} \binom{j-1/2-\alpha}{j} \frac{j!}{(2z)^j}
+O\left( \frac 1{|z|^m}\right)\right).
\end{equation}

\begin{proof}[Proof of Theorem \ref{kreg}]
Make these definitions:
\begin{equation*}
    n'':=n+\frac{k-1}{24}, \qquad k':=\frac{2\pi^2}{3}\left(1-\frac 1k \right), \qquad  T:= k' \cdot n'', \qquad F(z):=\log \left( \frac{1}{\sqrt{z}} I_1\left( \sqrt{z}\right)\right).
\end{equation*}
Then by \e{bkas}, for $\lambda=j/n''$,
\begin{equation*}
  \log(b_k(n+j)) = \log\left(  \frac{\pi^2}{3 \sqrt{k}}\left(1-\frac 1k \right)\right) + F(T(1+\lambda))+O\left( e^{-\varepsilon_k \sqrt{n}}\right).
\end{equation*}
For $T$ large and $\lambda\in \C$ small we have $F(T(1+\lambda)) =O(\sqrt{T})=O(\sqrt{n})$ by \e{ibexp}. Hence we have the Taylor expansion
$$
 F(T(1+\lambda)) = \sum_{r=0}^{m} \frac{T^r}{r!} F^{(r)}(T) \cdot \lambda^r +O\left( \sqrt{n} |\lambda|^{m+1}\right)
$$
and so, for $j$ with $1\lqs j \lqs m+1$,
\begin{equation} \label{bop}
  \log\left(\frac{b_k(n+j)}{b_k(n)}\right) = \sum_{r=1}^{m} \frac{(k')^r}{r!} F^{(r)}(T) \cdot j^r +
O\left( \frac 1{n^{m+1/2}}\right),
\end{equation}
 as $n \to \infty$ for an implied constant depending only on $m$.
Next write
\begin{equation*}
  F'(z) = \frac{1}{2 \sqrt{z}} G(\sqrt{z}) -\frac 1{z}
 \qquad \text{for} \qquad G(z):=\frac{I_0(z)}{I_1(z)}.
\end{equation*}
It follows from \e{ibexp} that  we have the expansion
  \begin{equation} \label{gex2}
  G(z)=\sum_{j=0}^m \frac{\tau_j}{z^j}+O\left( \frac 1{|z|^{m+1}}\right) \qquad (m \in \Z_{\gqs 0})
\end{equation}
for certain coefficients $\tau_j$.
A computation shows
\begin{equation} \label{chum}
  F'(x) = \frac{1}{2x^{1/2}} - \frac{3}{4x^{2/2}} + \frac{3}{16x^{3/2}} + \frac{3}{16x^{4/2}} + \cdots + \frac{c_{R}}{x^{R/2}} + O\left( \frac{1}{x^{(R+1)/2}}\right),
\end{equation}
as $x \to \infty$. Use \e{ib2} to check that
$G'(z) = 1+ G(z)/z-G(z)^2$. Consequently, all the derivatives of $F(z)$ may be expressed using powers of $G(z)$.
 Lemma \ref{diff} now implies that the repeated derivatives of both sides of \e{chum} agree. Comparing \e{hjk} and \e{bop} shows
\begin{align}
  A(n) & = \frac{k'}{1!} F'(T) = \frac{k'}{2\sqrt{T}}+O\left( \frac 1{T}\right), \label{tt}\\
  -\delta(n)^2 & = \frac{(k')^{2}}{2!} F''(T) = -\frac{(k')^{2}}{8 T^{3/2}}+O\left( \frac 1{T^2}\right), \label{tt2}\\
g_r(n) & = \frac{(k')^{r}}{r!} F^{(r)}(T)
= O\left( \frac 1{n^{r-1/2}}\right) \qquad (r\gqs 3). \label{wro}
\end{align}
In terms of $n$ we obtain the estimates
\begin{gather}\label{adw}
  A(n)=\frac{\sqrt{k'}}{2\sqrt{n}}+O\left(\frac 1n\right), \qquad
  \delta(n)= \frac{(k')^{1/4}}{8^{1/2} n^{3/4}}\left(1+O\left(\frac 1{n^{1/2}}\right)\right), \\
  g_r(n)
= \binom{1/2}{r}\frac{\sqrt{k'}}{n^{r-1/2}} + O\left( \frac 1{n^{r}}\right). \label{est}
\end{gather}
The conditions of Definition \ref{dbf} and Theorems \ref{vog}, \ref{vog2} are seen to hold
and the theorem follows.
\end{proof}

Theorem 1.1 of  \cite{CP} claims that the sequence $b_k(n)$ is Hermite-Jensen (for the $J_{b_k}^{d,n}$ polynomials). There are several problems with the proof in \cite[Sect. 2]{CP},  the main one being that they try to use the misstated version of \cite[Thm. 3]{GORZ}. To establish that $b_k(n)$ is log-polynomial, their version of \e{bop} has
\begin{equation*}
  \log\left(\frac{b_k(n+j)}{b_k(n)}\right) = A^*(n) j - \delta^*(n)^2 j^2 + \sum_{r=3}^{m} g_r^*(n) j^r  +
O\left( \frac 1{n^{m+1/2}}\right)
\end{equation*}
for $g_3^*(n), \dots,g_m^*(n)$ all identically $0$. But these $g_r^*(n)$s cannot be $0$ since Proposition \ref{logp} and \e{not} demonstrate that $g_r^*(n)$ and $g_r(n)$, estimated in \e{est}, must agree up to the error $o(\delta(n)^{m+1})$ which becomes arbitrarily small for large $m$.
 Nevertheless, Theorem 1.1 of  \cite{CP} is  stated correctly because their expressions for $A^*(n)$ and $\delta^*(n)$ match ours in \e{tt}, \e{tt2}, \e{adw} to within the error allowed by Theorem \ref{vog2}.

\section{Gamma sequences} \label{last}
In this final section we see how an application of Theorems \ref{vog} and \ref{vog2} to the simple sequence $\alpha(n):=\G(n+\beta)$ of  gamma values leads naturally to the classical limit for generalized Laguerre polynomials
\begin{equation}\label{lagher}
   \left(\frac{2}{r}\right)^{d/2} L_d^{(r)}\left(r+\sqrt{2r} X\right) \to  \frac{H_d(-X)}{d!} \qquad \text{when} \quad r\to \infty,
\end{equation}
as in \cite[Eq. (6.2.14)]{AAR}. Set $L_d^{*(r)}(X):=X^d L_d^{(r)}(1/X)$, and we also obtain a reciprocal version of \e{lagher} that we have not found in the literature:
\begin{equation}\label{laghera}
   \left(2r\right)^{d/2} L_d^{*(r)}\left(\frac 1{r}+\frac{\sqrt{2}X}{r^{3/2}}\right) \to  \frac{H_d(X)}{d!}  \qquad \text{when} \quad r\to \infty.
\end{equation}

Start by putting  $n':=n+\beta$ where we may assume $0\lqs \beta \lqs 1$. Also set
$$
T:= n', \qquad \lambda:=j/n', \qquad F(z):=\log \G(z).
$$
Then
\begin{equation*}
  \log(\alpha(n+j))= F(T(1+\lambda)).
\end{equation*}
Let $m\gqs 2$ be a fixed integer. Using Stirling's formula to bound $F$, and Lemma \ref{tay}, yields the expansion
$$
 F(T(1+\lambda)) = \sum_{r=0}^{m} \frac{T^r}{r!} F^{(r)}(T) \cdot \lambda^r +O\left( n \log n|\lambda|^{m+1}\right)
$$
and hence, for $j$ in the  range  we want $1 \lqs j \lqs m+1$,
\begin{equation} \label{wimbg}
  \log\left(\frac{\alpha(n+j)}{\alpha(n)}\right) = \sum_{r=1}^{m} \frac{1}{r!} F^{(r)}(T) \cdot j^r +O\left( \frac{\log n}{n^{m}}\right)
\end{equation}
as $n \to \infty$ for an implied constant depending on $m$. %A comparison of \e{wimbg} with \e{hjk} now gives explicit expressions for $A(n)$, $\kappa \cdot\delta(n)^2$ and $g_k(n)$ in terms of derivatives of $F(z)$.
The first derivative is the digamma function,
$$
F'(z)= \frac{\G'(z)}{\G(z)} = \psi(z),
$$
 with a well-known asymptotic expansion involving Bernoulli numbers \cite[6.3.18]{AS}:
\begin{equation*}
  F'(z)=  \psi(z)= \log z-\frac{1}{2z} - \sum_{j=2}^m \frac{B_j}{j z^j} + O\left( \frac{1}{|z|^{m+1}}\right).
\end{equation*}
Its derivatives are the polygamma functions and with \cite[6.4.11]{AS} they have the expansions for $r\gqs 2$
\begin{equation*}
  F^{(r)}(z)=\psi^{(r-1)}(z)= (-1)^r \left[ \frac{(r-2)!}{z^{r-1}}+  \frac{(r-1)!}{2z^{r}}+ \sum_{j=2}^m \frac{(j+r-2)! B_j}{j! z^{j+r-1}} + O\left( \frac{1}{|z|^{m+r}}\right)\right].
\end{equation*}
  From \e{wimbg} we obtain
\begin{align}
  A(n) & = \frac{1}{1!} F'(T) = \psi(T)=\log T+O\left( \frac 1{T}\right), \label{sfg}\\
  \delta(n)^2 & = \frac{1}{2!} F''(T) = \frac{\psi^{(1)}(T)}{2}=\frac{1}{2 T}+O\left( \frac 1{T^2}\right),\label{sfg2}\\
g_k(n) & = \frac{1}{k!} F^{(k)}(T) = \frac{\psi^{(k-1)}(T)}{k!} = O\left( \frac 1{n^{k-1}}\right) \qquad (k\gqs 3). \label{sfg3}
\end{align}
Therefore
\begin{gather}
  A(n) = \log(n+\beta)+O\left( \frac 1{n}\right), %\label{sfg4}
\qquad
 \delta(n)
=\frac{1}{\sqrt{2(n+\beta)}}\left(1+O\left( \frac 1{n}\right)\right). \label{sfg5}
\end{gather}
Checking the conditions of Definition \ref{dbf} and Theorems \ref{vog}, \ref{vog2}, we have proved

\begin{theorem}
The sequence $\alpha(n)=\G(n+\beta)$ is log-polynomial for
all degrees $m\gqs 2$ with data $\{\psi(n+\beta)$, $1$, $\left(\psi^{(1)}(n+\beta)/2\right)^{1/2}\}$. It is  Hermite-Jensen for this data as well as the simpler  $$\{ \log(n+\beta), 1, \left(2(n+\beta)\right)^{-1/2}\}.$$
\end{theorem}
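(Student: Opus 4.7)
The groundwork has essentially been laid in the paragraphs preceding the theorem statement, where the Taylor expansion \e{wimbg} is established by combining Stirling's formula (to bound $F(z)=\log\Gamma(z)$) with Lemma \ref{tay}. My plan is therefore to assemble the pieces: identify the data $\{A(n),\kappa,\delta(n)\}$ from the coefficients in \e{wimbg}, verify that every condition of Definition \ref{dbf} is satisfied, invoke Theorem \ref{vog} for the exact data, and finally verify the hypotheses of Theorem \ref{vog2} to deduce the Hermite-Jensen property for the simpler data.

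The key identifications are $A(n)=\psi(n+\beta)$, $\kappa\cdot\delta(n)^2=\psi^{(1)}(n+\beta)/2$, and $g_k(n)=\psi^{(k-1)}(n+\beta)/k!$, as recorded in \e{sfg}--\e{sfg3}. Since the trigamma function satisfies $\psi^{(1)}(T)>0$ for $T>0$ with leading behavior $1/T$, we read off $\kappa=+1$ and $\delta(n)^2\sim 1/(2T)>0$, so in particular $\delta(n)\to 0$. The substantive check is $g_k(n)=o(\delta(n)^k)$ for $k\geq 3$: the polygamma expansion gives $g_k(n)=O(n^{-(k-1)})$, while $\delta(n)^k\sim (2n)^{-k/2}$, so
\begin{equation*}
\frac{g_k(n)}{\delta(n)^k} = O\bigl(n^{-(k-1)+k/2}\bigr) = O\bigl(n^{1-k/2}\bigr)\longrightarrow 0
\end{equation*}
precisely because $k\geq 3$. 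This is the one inequality that actually requires attention; everything else is a direct reading of the formulas.

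Once these conditions hold, Definition \ref{dbf} is satisfied for every $m\geq 2$, so Theorem \ref{vog} immediately gives the Hermite-Jensen property for the data $\{\psi(n+\beta),1,(\psi^{(1)}(n+\beta)/2)^{1/2}\}$ at all degrees $d\geq 1$. For the simpler data, I would set $A^*(n):=\log(n+\beta)$ and $\delta^*(n):=(2(n+\beta))^{-1/2}$ and check the two conditions of \e{tre}. The expansions \e{sfg5} yield $A^*(n)-A(n)=O(1/n)=o(n^{-1/2})=o(\delta(n))$ and $\delta^*(n)/\delta(n)=1+O(1/n)=1+o(1)$, so Theorem \ref{vog2} transfers the Hermite-Jensen conclusion to this simpler data.

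There is no real obstacle; the potential pitfall would be sign/normalization errors in the polygamma coefficients (since here $\kappa=+1$, flipping the situation from the earlier partition examples), and the verification that $g_k(n)=o(\delta(n)^k)$ genuinely requires $k\geq 3$. Both are cleanly handled by the polygamma asymptotics.
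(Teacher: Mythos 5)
Your proposal is correct and follows essentially the same route as the paper: it reads off $A(n)=\psi(n+\beta)$, $\kappa=+1$, $\delta(n)^2=\psi^{(1)}(n+\beta)/2$ and $g_k(n)=\psi^{(k-1)}(n+\beta)/k!$ from \e{wimbg}, verifies the conditions of Definition \ref{dbf} via the polygamma asymptotics (with the key check $g_k(n)/\delta(n)^k = O(n^{1-k/2})\to 0$ for $k\gqs 3$), and then applies Theorems \ref{vog} and \ref{vog2} exactly as the paper does. The only step you leave implicit is that the error $O(\log n/n^{m})$ in \e{wimbg} is indeed $o(\delta(n)^{m+1})=o(n^{-(m+1)/2})$ for $m\gqs 2$, which is immediate.
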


The Jensen polynomials for the reciprocal sequence $1/\alpha(n)$ are essentially  the generalized  Laguerre polynomials we saw in \e{lagu}:
\begin{equation*}
  J^{d,n}_{1/\alpha}(X):=\sum_{j=0}^d \binom{d}{j} \frac{1}{\G(n+j+\beta)} X^j = \frac{d!}{\G(n+d+\beta)} L_d^{(n+\beta-1)}(-X).
\end{equation*}
This sequence is Hermite-Jensen for the data
$$\{ -\log(n+\beta), -1, \left(2(n+\beta)\right)^{-1/2}\}$$
by part (ii) of Proposition \ref{logp}, and for each $\beta$ we obtain
\begin{equation} \label{wei}
  \lim_{n\to \infty} (2(n+\beta))^{d/2}\frac{\G(n+\beta)}{\G(n+\beta+d)} L_d^{(n+\beta-1)}\left((n+\beta)\left(1-\frac{X}{\sqrt{2(n+\beta)}}\right)\right) = \frac{H_d(X/2)}{d!}.
\end{equation}

\begin{prop} \label{aabc}
When $r\to \infty$ we have
\begin{align}
   (2r)^{d/2}\frac{\G(r)}{\G(r+d)} L_d^{(r-1)}\left(r+\sqrt{2r}X\right) & \to \frac{H_d(-X)}{d!},
  \label{wa}\\
   (2r^3)^{d/2}\frac{\G(r)}{\G(r+d)} L_d^{*(r-1)}\left(\frac 1{r}+\frac{\sqrt{2}X}{r^{3/2}}\right) & \to \frac{H_d(X)}{d!}, \label{wb}
\end{align}
where $L_d^{*(r-1)}(x)$ is the reciprocal version of $L_d^{(r-1)}(x)$, defined after \e{lagher}.
\end{prop}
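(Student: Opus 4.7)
The plan is to derive both limits from the Hermite-Jensen property of the reciprocal gamma sequence $1/\alpha(n)=1/\G(n+\beta)$ just established: \e{wa} from the $J$-polynomial limit (which is \e{wei}) and \e{wb} from the $K$-polynomial half of \e{hjak}.

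For \e{wa}, I would start from \e{wei} and substitute $X\mapsto -2X$. The argument $(n+\beta)(1-X/\sqrt{2(n+\beta)})$ then becomes $(n+\beta)+\sqrt{2(n+\beta)}\,X$, while the right side $H_d(X/2)/d!$ becomes $H_d(-X)/d!$; writing $r=n+\beta$ yields \e{wa} along the sequence $r=n+\beta$ with $n\to\infty$. To upgrade this to a limit through arbitrary real $r\to\infty$, I would note that all the analytic input of Section \ref{last} (Stirling's expansion, the Taylor bound of Lemma \ref{tay}, the polygamma expansions) concerns the real-analytic function $\log\G$ and is uniform for $\beta$ in a bounded interval; equivalently, the implied constant in \e{wimbg} is independent of $\beta\in[0,1]$, so we may take $\beta=r-\lfloor r\rfloor$ and $n=\lfloor r\rfloor$.

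For \e{wb}, I would apply \e{hjak} in the $K$ form to $1/\alpha(n)$, which has data $\{-\log(n+\beta),-1,(2(n+\beta))^{-1/2}\}$. Direct computation gives normalizing factor $(\exp(A)\delta)^{-d}=(2(n+\beta)^3)^{d/2}$ and $K$-argument $-\tfrac{1}{n+\beta}+\tfrac{X}{\sqrt{2(n+\beta)^3}}$; choosing the Hermite variable to be $-2X$ turns this argument into $-\bigl(\tfrac{1}{n+\beta}+\tfrac{\sqrt{2}\,X}{(n+\beta)^{3/2}}\bigr)$. Comparing the defining series of $K^{d,n}_{1/\alpha}$ with that of $L_d^{*(n+\beta-1)}$ yields the identity
\[K^{d,n}_{1/\alpha}(-Z)=\frac{(-1)^d\,d!}{\G(n+d+\beta)}\,L_d^{*(n+\beta-1)}(Z).\]
Substituting $Z=\tfrac{1}{n+\beta}+\tfrac{\sqrt{2}\,X}{(n+\beta)^{3/2}}$ and absorbing the resulting $(-1)^d$ through the Hermite parity $H_d(-X)=(-1)^d H_d(X)$ produces \e{wb} with $r=n+\beta$, extended to real $r\to\infty$ by the same uniformity argument as above.

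The main obstacle is the algebraic bookkeeping: correctly matching $(\exp(A)\delta)^{-d}$ to $(2r^3)^{d/2}$, lining up $K^{d,n}_{1/\alpha}$ with $L_d^*$ modulo sign, and tracking the cancellation of the two independent $(-1)^d$ factors that arise (one from the $K$-to-$L^*$ identification, one from the Hermite parity). Once these substitutions are written out, no new analytic input is required.
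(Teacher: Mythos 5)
Your derivation of \e{wa} and \e{wb} follows the same route as the paper for the core substance: \e{wa} is obtained from \e{wei} by the substitution $X\mapsto -2X$ with $r=n+\beta$, and \e{wb} from the $K$-half of \e{hjak} applied to $1/\G(n+\beta)$; your bookkeeping for the latter (the factor $(\exp(A)\delta)^{-d}=(2(n+\beta)^3)^{d/2}$, the identity $K^{d,n}_{1/\alpha}(-Z)=(-1)^d\,d!\,L_d^{*(n+\beta-1)}(Z)/\G(n+d+\beta)$, and the cancellation of the two $(-1)^d$ factors via $H_d(-X)=(-1)^dH_d(X)$) checks out and is in fact more explicit than the paper's one-line ``similarly.'' The genuine difference is the step upgrading the limit from the sequence $r=n+\beta$ to arbitrary real $r\to\infty$. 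You propose to take $\beta=r-\lfloor r\rfloor$ and argue that all error terms in Section \ref{last} are uniform for $\beta\in[0,1]$; this is plausible but heavier than you acknowledge, since Theorems \ref{pj}, \ref{vog} and \ref{vog2} are stated for a single sequence with non-quantified $o(\cdot)$ terms, so you would need to re-run that machinery with uniform control in $\beta$ throughout. The paper instead observes that the coefficient of $X^{d-j}$ on the left of \e{wa} (and \e{wb}) is $r^{j/2}$ times a rational function of $r$; a quantity of that form which converges along the integers automatically converges to the same value as $r\to\infty$ through the reals, so the limit for a single value of $\beta$ suffices and no uniformity is needed. Your route works but costs a nontrivial verification; the paper's algebraic observation eliminates that step entirely, and you may want to adopt it.
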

\begin{proof}
Let  $r=n+\beta$ and replace $X$ by $-2X$ on the left side of \e{wei} to get the left side of \e{wa}. It can be easily checked that the coefficient of $X^{d-j}$ in this expression is $r^{j/2}$ times a rational function of $r$. Therefore, if the limit \e{wa} holds for $r$ in the subsequence of integers it must hold for all $r \to \infty$. Similarly, for the reciprocal Jensen version with $K^{d,n}_{1/\alpha}$ in \e{hjak},  we obtain \e{wb}.
\end{proof}

It follows from Proposition \ref{aabc} that the zeros of $L_d^{(r)}(x)$ are real for $r$ sufficiently large. That is already clear in this case since $L_d^{(r)}(x)$ is orthogonal for $r\gqs -1$.

The identity
\begin{equation*}
  L_d^{(r-1)}(x) = L_d^{(r)}(x)- L_{d-1}^{(r)}(x)
\end{equation*}
from \cite[Eq. (22.7.30)]{AS} may be used to show that \e{wa} and \e{lagher} are really equivalent. For this, write
\begin{equation*}
  L_d^{(r)}\left(r+\sqrt{2r}X\right) = \sum_{j=0}^d \ell_{d,j}(r) X^j, \qquad \frac{H_d(-X)}{d!}  = \sum_{j=0}^d h_{d,j} X^j.
\end{equation*}
Then \e{wa} is true if and only if
\begin{equation} \label{jar}
  (2r)^{d/2}\frac{\G(r)}{\G(r+d)}\Bigr[ \ell_{d,j}(r) - \ell_{d-1,j}(r)\Bigl] \to h_{d,j}
\end{equation}
for $j=0,1,\dots,d$ as $r \to \infty$. Since
\begin{equation*}
  \frac{\G(r)}{\G(r+d)} = \frac{1}{(r+d-1)(r+d-2)\cdots (r)}= \frac{1}{r^d}\left( 1 + O\left( \frac{1}{r}\right)\right),
\end{equation*}
we see that \e{jar} is equivalent to
\begin{equation}  \label{jar2}
  \left(\frac{2}{r}\right)^{d/2} \ell_{d,j}(r) - \left(\frac{2}{r}\right)^{1/2} \left[ \left(\frac{2}{r}\right)^{(d-1)/2} \ell_{d-1,j}(r)\right] \to h_{d,j}
\end{equation}
for $j=0,1,\dots,d$.
Now, \e{jar2} being true for every positive integer $d$ is equivalent to
\begin{equation}  \label{jar3}
  \left(\frac{2}{r}\right)^{d/2} \ell_{d,j}(r)  \to h_{d,j} \qquad \text{for} \qquad j=0,1,\dots,d
\end{equation}
being true for every positive integer $d$ because the $d=0$ case of \e{jar3} is just $1 \to 1$. It follows that \e{wa} can be used to prove \e{lagher}, and vice versa.
In a similar way, \e{wb} and \e{laghera} are equivalent.

{\small
\bibliography{jen-bib}
}

{\small %\footnotesize
\vskip 5mm
\noindent
\textsc{Dept. of Math, The CUNY Graduate Center, 365 Fifth Avenue, New York, NY 10016-4309, U.S.A.}

\noindent
{\em E-mail address:} \texttt{cosullivan@gc.cuny.edu}
}

\end{document}